\def\s{\sigma}
\def\ee{\mathbf{e}}
\def\ss{\mathbf{E}}
\def\so{\mathbf{E}_0}
\def\e{\epsilon}
\title{Extended Bayesian Information Criteria for Gaussian Graphical Models}
\def\trace{\mathrm{trace}}
\newtheorem{theorem}{Theorem}
\newtheorem*{mtheorem}{Main Theorem}
\newtheorem{lemma}{Lemma}
\newtheorem{alemma}{Lemma}[section]
\theoremstyle{remark}
\newtheorem{remark}{Remark}
\author{Rina Foygel\\
University of Chicago\\
\texttt{rina@uchicago.edu} \\
\And
Mathias Drton \\
University of Chicago\\
\texttt{drton@uchicago.edu} \\
}
\begin{document}
\maketitle

\begin{abstract}%
  Gaussian graphical models with sparsity in the inverse covariance
  matrix are of significant interest in many modern applications.  For
  the problem of recovering the graphical structure, information
  criteria provide useful optimization objectives for algorithms
  searching through sets of graphs or for selection of tuning
  parameters of other methods such as the graphical lasso, which is a
  likelihood penalization technique.  In this paper we establish the
   consistency of an extended Bayesian information criterion
  for Gaussian graphical models in a scenario where both the number of
  variables $p$ and the sample size $n$ grow.  Compared to earlier
  work on the regression case, our treatment allows for growth in the
  number of non-zero parameters in the true model, which is necessary
  in order to cover connected graphs.  We demonstrate the performance
  of this criterion on simulated data when used in conjunction with the
  graphical lasso, and verify that the criterion indeed performs
  better than either cross-validation or the ordinary Bayesian information
  criterion when $p$ and the number of non-zero parameters $q$ both scale with $n$.
\end{abstract}

\section{Introduction}
\label{section_Intro}

This paper is concerned with the problem of model selection (or
structure learning) in Gaussian graphical modelling.  A Gaussian
graphical model for a random vector $X=(X_1,\dots,X_p)$ is determined
by a graph $G$ on $p$ nodes.  The model comprises all multivariate
normal distributions $N(\mu,\Theta^{-1})$ whose inverse covariance
matrix satisfies that $\Theta_{jk}=0$ when $\{j,k\}$ is not an edge in
$G$.  For background on these models, including a discussion of the
conditional independence interpretation of the graph, we refer the
reader to \cite{lauritzen:1996}.

In many applications, in particular in the analysis of gene expression
data, inference of the graph $G$ is of significant interest.
Information criteria provide an important tool for this problem.  They
provide the objective to be minimized in (heuristic) searches over the
space of graphs and are sometimes used to select tuning parameters in
other methods such as the graphical lasso of \cite{friedman:2008}. In
this work we study an extended Bayesian information criterion (BIC)
for Gaussian graphical models.  Given a sample of $n$ independent and
identically distributed observations, this criterion takes the form
\begin{equation}
  \label{eq:graphical-ebic}
  BIC_{\gamma}(\ss)=-2l_n(\hat{\Theta}(\ss))+|\ss|\log n+4|\ss|\gamma
  \log p,
\end{equation}
where $\ss$ is the edge set of a candidate graph and
$l_n(\hat{\Theta}(\ss))$ denotes the maximized log-likelihood function
of the associated model.  (In this context an edge set comprises unordered pairs
$\{j,k\}$ of distinct elements in $\{1,\dots,p\}$.) The criterion is
indexed by a parameter $\gamma\in[0,1]$; see the Bayesian
interpretation of $\gamma$ given in \cite{chen:2008}.  If $\gamma=0$,
then the classical BIC of \cite{schwarz:1978} is recovered, which is
well known to lead to (asymptotically) consistent model selection in
the setting of fixed number of variables $p$ and growing sample size
$n$.  Consistency is understood to mean selection of the smallest true
graph whose edge set we denote $\so$. Positive $\gamma$ leads to
stronger penalization of large graphs and our main result states that
the (asymptotic) consistency of an exhaustive search over a restricted
model space may then also hold in a scenario where $p$ grows
moderately with $n$ (see the Main Theorem in
Section~\ref{section_Main}).  Our numerical work demonstrates that
positive values of $\gamma$ indeed lead to improved graph inference
when $p$ and $n$ are of comparable size (Section~\ref{section_Sim}).

The choice of the criterion in (\ref{eq:graphical-ebic}) is in analogy to a
similar criterion for regression models that was first proposed in
\cite{bogdan:2004} and theoretically studied in \cite{chen:2008,chen:2010}.
Our theoretical study employs ideas from these latter two papers as well as
distribution theory available for decomposable graphical models.  As
mentioned above, we treat an exhaustive search over a restricted model
space that contains all decomposable models given by an edge set of
cardinality $|\ss|\le q$.  One difference to the regression treatment of
\cite{chen:2008,chen:2010} is that we do not fix the dimension bound $q$
nor the dimension $|\ss_0|$ of the smallest true model.  This is necessary
for connected graphs to be covered by our work.

In practice, an exhaustive search is infeasible even for moderate
values of $p$ and $q$.  Therefore, we must choose some method for
preselecting a smaller set of models, each of which is then scored by
applying the extended BIC (EBIC).  Our simulations show that the
combination of EBIC and graphical lasso gives good results well beyond
the realm of the assumptions made in our theoretical analysis.  This
combination is consistent in settings where both the lasso and the
exhaustive search are consistent but in light of the good theoretical
properties of lasso procedures (see \cite{ravikumar:2008}), studying
this particular combination in itself would be an interesting topic
for future work.

\section{Consistency of the extended BIC for Gaussian graphical models}
\label{section_Main}

\subsection{Notation and definitions}
\label{subsec:prelim}

In the sequel we make no distinction between the edge set $\ss$ of a
graph on $p$ nodes and the associated Gaussian graphical model.
Without loss of generality we assume a zero mean vector for all
distributions in the model.  We also refer to $\ss$ as a set of
entries in a $p\times p$ matrix, meaning the $2|\ss|$ entries indexed
by $(j,k)$ and $(k,j)$ for each $\{j,k\}\in\ss$.  We use $\Delta$ to
denote the index pairs $(j,j)$ for the diagonal entries of the matrix.

Let $\Theta_0$ be a positive definite matrix supported on
$\Delta\cup\so$. In other words, the non-zero entries of $\Theta_0$ are
precisely the diagonal entries as well as the off-diagonal positions
indexed by $\so$; note that a single edge in $\so$ corresponds to two
positions in the matrix due to symmetry.  Suppose the random vectors
$X_1,\dots,X_n$ are independent and distributed identically according
to $N(0,\Theta_0^{-1})$.  Let $S=\frac{1}{n}\sum_i X_iX_i^T$ be the
sample covariance matrix. The Gaussian log-likelihood function
simplifies to 
\begin{equation}
  \label{eq:loglik}
  l_n(\Theta)=\frac{n}{2}\left[\log\det(\Theta)-\trace(S\Theta)\right].
\end{equation}

We introduce some further notation.  First, we define the maximum
variance of the individual nodes:
$$
\s^2_{\max}=\max_j (\Theta_0^{-1})_{jj}.
$$
Next, we define $\theta_0=\min_{\ee\in\so}|(\Theta_0)_{\ee}|$, the minimum signal over
the edges present in the graph. (For edge $\ee=\{j,k\}$, let
$(\Theta_0)_{\ee}=(\Theta_0)_{jk}=(\Theta_0)_{kj}$.)  Finally, we write
$\lambda_{\max}$ for the maximum eigenvalue of $\Theta_0$.  Observe that
the product $\sigma^2_{\max}\lambda_{\max}$ is no larger than the condition
number of $\Theta_0$ because
$1/\lambda_{\min}(\Theta_0)=\lambda_{\max}(\Theta_0^{-1})\geq
\sigma^2_{\max}$.

\subsection{Main result}

Suppose that $n$ tends to infinity with the following asymptotic
assumptions on data and model:
\begin{equation}
\left\{\begin{array}{l}
\so\text{ \ is \ decomposable, \ with \ }|\so|\leq q,\\
\s^2_{\max}\lambda_{\max}\leq C,\\
p=\mathbf{O}(n^{\kappa}), \ p\rightarrow\infty,\\
\gamma_0=\gamma-(1-\frac{1}{4\kappa})>0,\\
(p+2q)\log p\times\frac{\lambda^2_{\max}}{\theta_0^2}=\mathbf{o}(n)\\\end{array}\right.
\label{eqn_Asymp}
\end{equation}
Here $C,\kappa>0$ and $\gamma$ are fixed reals, while the integers
$p,q$, the edge set $\so$, the matrix $\Theta_0$, and thus the
quantities $\s^2_{\max}$, $\lambda_{\max}$ and $\theta_0$ are
implicitly allowed to vary with $n$.  We suppress this latter
dependence on $n$ in the notation.  The `big oh' $\mathbf{O}(\cdot)$
and the `small oh' $\mathbf{o}(\cdot)$ are the Landau symbols.

\begin{mtheorem}
  Suppose that conditions (\ref{eqn_Asymp})
  hold. Let $\mathcal{E}$ be the set of all decomposable models $\ss$
  with $|\ss|\leq q$. Then with probability tending to $1$ as
  $n\rightarrow\infty$,
  $$
  \so=\arg\min_{\ss\in\mathcal{E}}\mathrm{BIC}_{\gamma}(\ss).
  $$
  That is, the extended BIC with parameter $\gamma$ selects the
  smallest true model $\so$ when applied to any subset of
  $\mathcal{E}$ containing $\so$.
\end{mtheorem}

In order to prove this theorem we use two techniques for comparing
likelihoods of different models.  Firstly, in Chen and Chen's work on
the GLM case \cite{chen:2010}, the Taylor approximation to the
log-likelihood function is used and we will proceed similarly when
comparing the smallest true model $\so$ to models $\ss$ which do not
contain $\so$.  The technique produces a lower bound on the decrease
in likelihood when the true model is replaced by a false model.  

\begin{theorem}
  \label{thm:1}
  Suppose that conditions (\ref{eqn_Asymp}) hold. Let $\mathcal{E}_1$
  be the set of models $\ss$ with $\ss\not\supset\so$ and $|\ss|\leq
  q$. Then with probability tending to $1$ as $n\rightarrow\infty$,
  $$
  l_n(\Theta_0)-l_n(\hat{\Theta}(\ss))>2q(\log p)(1+\gamma_0) \quad 
  \forall \ \ss\in\mathcal{E}_1.
  $$
\end{theorem}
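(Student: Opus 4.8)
The plan is to control, uniformly over all $\ss \in \mathcal{E}_1$, the likelihood gap $l_n(\Theta_0) - l_n(\hat\Theta(\ss))$ from below. First I would reduce to a Taylor-expansion argument at $\Theta_0$. Writing $\hat\Theta(\ss)$ for the MLE in model $\ss$ and expanding $l_n$ to second order around $\Theta_0$, the difference $l_n(\Theta_0) - l_n(\hat\Theta(\ss))$ is, up to higher-order terms, a quadratic form in $\hat\Theta(\ss) - \Theta_0$ governed by the Fisher information, which for the Gaussian log-likelihood is $\frac{n}{2}$ times the Kronecker-type operator $\Theta_0^{-1}\otimes\Theta_0^{-1}$ (restricted to the relevant matrix entries). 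Since $\ss \not\supset \so$, at least one true edge $\ee \in \so \setminus \ss$ is forced to zero in model $\ss$, so $\hat\Theta(\ss) - \Theta_0$ has a coordinate in position $\ee$ of magnitude at least $|\,(\Theta_0)_\ee\,| \ge \theta_0$; combined with the eigenvalue bound $\lambda_{\max}(\Theta_0^{-1}\otimes\Theta_0^{-1}) = \lambda_{\min}(\Theta_0)^{-2} = 1/\text{(something)}$ — more precisely a \emph{lower} bound on the information via $\lambda_{\max}^{-2}$ is what enters — this yields a deterministic lower bound on the population likelihood drop of order $n\theta_0^2/\lambda_{\max}^2$.

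Next I would convert this population statement into a high-probability statement about the sample quantity. The two error sources are: (i) the gap between $l_n(\Theta_0)$ and $l_n$ evaluated at the \emph{population}-optimal parameter inside $\ss$, versus the sample MLE $\hat\Theta(\ss)$; and (ii) fluctuations of $S - \Theta_0^{-1}$. For (ii) one uses standard concentration for the sample covariance of a Gaussian vector: on an event of probability tending to $1$, $\|S - \Theta_0^{-1}\|$ (in an appropriate norm, e.g.\ entrywise or spectral restricted to a $q$-sparse support) is $\mathbf{o}$ of the scale $\theta_0/\lambda_{\max}$, using the bound $(p+2q)\log p \cdot \lambda_{\max}^2/\theta_0^2 = \mathbf{o}(n)$ from (\ref{eqn_Asymp}). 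The dimension count $(p+2q)$ reflects the $p$ diagonal plus $2q$ off-diagonal free entries, and the $\log p$ factor comes from a union bound over the roughly $p^{2q}$ models in $\mathcal{E}_1$ (or more carefully over the $O(p^2)$ matrix positions, then over sparse supports) — this is exactly where the $\log p$ enters and must be tracked carefully. Plugging these estimates back into the Taylor expansion shows that, uniformly over $\mathcal{E}_1$, the likelihood drop is at least a constant times $n\theta_0^2/\lambda_{\max}^2$, which by the last assumption in (\ref{eqn_Asymp}) dominates $(p+2q)\log p \gg q\log p$, giving the claimed bound $2q(\log p)(1+\gamma_0)$ for $n$ large.

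The main obstacle is handling the MLE $\hat\Theta(\ss)$ without an explicit formula: for decomposable $\ss$ there is a closed-form MLE (a product of clique marginals over separator marginals of $S$), and I would exploit this to get quantitative control, but even so one must show $\hat\Theta(\ss)$ stays in a neighborhood of $\Theta_0$ on which the Taylor remainder is negligible. Concretely, the second-order remainder involves third derivatives of $\log\det$, which blow up if the smallest eigenvalue of the argument degenerates; so I must first establish, on the good event, that $\hat\Theta(\ss)$ has eigenvalues bounded away from $0$ and $\infty$ uniformly in $\ss$ — this uses $\s^2_{\max}\lambda_{\max} \le C$ together with the spectral concentration of $S$. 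Once that uniform well-conditioning is in hand, the remainder is of strictly smaller order than the leading quadratic term, and the argument closes. The uniformity over the exponentially large class $\mathcal{E}_1$, achieved through the $\log p$ budget supplied by assumption (\ref{eqn_Asymp}), is the delicate accounting step.
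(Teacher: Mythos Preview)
Your overall architecture matches the paper: Taylor-expand $l_n$ at $\Theta_0$, use that at least one edge in $\so\setminus\ss$ is forced to zero so the displacement has Frobenius norm $\ge\theta_0$, control the linear (score) term by entrywise concentration of $S-\Theta_0^{-1}$ with a $\log p$ union-bound cost, and lower-bound the quadratic (Hessian) term via $\lambda_{\max}^{-2}$. However, you miss the one idea that makes the argument close cleanly: the paper never analyzes $\hat\Theta(\ss)$ at all. Since $l_n$ is concave, it suffices to prove the inequality for every $\Theta$ supported on $\Delta\cup\so\cup\ss$ with $|\Theta-\Theta_0|_F=\theta_0$ \emph{exactly}; concavity then forces $l_n(\hat\Theta(\ss))$ to be even smaller, because $\hat\Theta(\ss)$ lies at Frobenius distance $\ge\theta_0$. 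Working on this fixed-radius sphere, the intermediate point $\tilde\Theta$ in the exact second-order (mean-value) expansion automatically has eigenvalues $\le\lambda_{\max}+\theta_0\le 2\lambda_{\max}$, so the Hessian lower bound $\tfrac{n}{2}(2\lambda_{\max})^{-2}$ is immediate and there is no remainder term to control.

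This concavity reduction makes your ``main obstacle'' disappear: there is no need to establish well-conditioning of $\hat\Theta(\ss)$, no third-derivative bound, and no appeal to the closed-form decomposable MLE --- which matters, because $\mathcal{E}_1$ does \emph{not} assume decomposability, so that route would not cover all the required models anyway. A secondary point on the bookkeeping: the paper's union bound is only over the $\binom{p}{2}$ matrix positions, to control $(s_n(\Theta_0))_\ee$ entrywise via Lemma~\ref{lem:2} and the chi-square tail bounds; once that event holds, the likelihood inequality follows deterministically and simultaneously for every $\ss$, so no union over $p^{2q}$ models is needed. As written, your proposal has a genuine gap at exactly the place you flag: proving uniform eigenvalue control of $\hat\Theta(\ss)$ over all (possibly non-decomposable) $\ss\in\mathcal{E}_1$ is not obviously available from the stated assumptions, whereas the concavity trick sidesteps the issue entirely.
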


Secondly, Porteous \cite{porteous:1989} shows that in the case of two
nested models which are both decomposable, the likelihood ratio (at
the maximum likelihood estimates) follows a distribution that can be
expressed exactly as a log product of Beta distributions. We will use
this to address the comparison between the model $\so$ and
decomposable models $\ss$ containing $\so$ and obtain an upper bound
on the improvement in likelihood when the true model is expanded to a
larger decomposable model.

\begin{theorem}
  \label{thm:2}
  Suppose that conditions (\ref{eqn_Asymp}) hold.  Let $\mathcal{E}_0$
  be the set of decomposable models $\ss$ with $\ss\supset\so$ and
  $|\ss|\leq q$. Then with probability tending to $1$ as
  $n\rightarrow\infty$,
  $$
  l_n(\hat{\Theta}(\ss))-l_n(\hat{\Theta}(\so))<
  2(1+\gamma_0)(|\ss|-|\so|)\log p \quad  \forall 
  \ss\in\mathcal{E}_0\backslash\{\so\}.
  $$
\end{theorem}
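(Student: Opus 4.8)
The plan is a union bound over $\mathcal{E}_0\setminus\{\so\}$, organised by the number $k:=|\ss|-|\so|\ge 1$ of edges that $\ss$ has in excess of $\so$. Fix such an $\ss$. The asserted inequality says exactly that the deviance $D_{\ss}:=2\bigl[l_n(\hat\Theta(\ss))-l_n(\hat\Theta(\so))\bigr]$ stays below $4(1+\g_0)k\log p$. Since any decomposable $\ss$ containing $\so$ is obtained from $\so$ by successive single-edge additions through decomposable graphs, Porteous's result expresses the deviance as a log product of $k$ independent Beta variables: $D_{\ss}\overset{d}{=}-n\sum_{j=1}^{k}\log U_j$, where the $U_j$ are mutually independent and $U_j\sim\mathrm{Beta}\bigl(\tfrac{n-d_j}{2},\tfrac12\bigr)$ with $d_j<p$ an integer read off from the clique/separator structure. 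By (\ref{eqn_Asymp}) we have $p\log p=\mathbf{o}(n)$ and $q\log p=\mathbf{o}(n)$; in particular $d_j\le p=\mathbf{o}(n)$, so $n-d_j=n(1-o(1))$ uniformly over $\ss$ and $j$.

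For the per-model estimate I would use the representation $\mathrm{Beta}\bigl(\tfrac{m}{2},\tfrac12\bigr)\overset{d}{=}Y/(Y+Z^2)$ with $Y\sim\chi^2_m$ and $Z\sim N(0,1)$ independent. Taking $m=n-d_j$ and realising the pairs $(Y_j,Z_j)$ independently across $j$ gives $D_{\ss}\overset{d}{=}n\sum_{j=1}^{k}\log\!\bigl(1+Z_j^2/Y_j\bigr)$, the $2k$ variables being independent. Fix a constant $\e\in\bigl(0,\tfrac{\g_0}{1+\g_0}\bigr)$. On the event $A=\{Y_j\ge(1-\e)n\text{ for all }j\le k\}$, the bound $\log(1+x)\le x$ gives $D_{\ss}\le n\sum_j Z_j^2/Y_j\le(1-\e)^{-1}\sum_j Z_j^2$, a scaled $\chi^2_k$. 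Since each $Y_j\sim\chi^2_{n-d_j}$ with $n-d_j\ge n(1-o(1))$, a standard $\chi^2$ lower-tail bound gives $\mathbb{P}(A^c)\le k\,e^{-cn}$ for a constant $c=c(\e)>0$ independent of $\ss$. Combining this with the $\chi^2$ upper tail $\mathbb{P}(\chi^2_k\ge\lambda k)\le\lambda^{k/2}e^{(1-\lambda)k/2}$ at $\lambda=4(1-\e)(1+\g_0)\log p$ gives, uniformly over decomposable $\ss\supset\so$ with $|\ss|-|\so|=k$,
\[
\mathbb{P}\bigl(D_{\ss}\ge 4(1+\g_0)k\log p\bigr)\ \le\ k\,e^{-cn}\ +\ \bigl(4e(1-\e)(1+\g_0)\log p\bigr)^{k/2}\,p^{-2(1-\e)(1+\g_0)k}.
\]

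It remains to sum over models. The number of $\ss\in\mathcal{E}_0$ with $|\ss|-|\so|=k$ is at most $\binom{\binom{p}{2}}{k}\le p^{2k}$, so the probability that the asserted inequality fails for some $\ss\in\mathcal{E}_0\setminus\{\so\}$ is at most $\sum_{k\ge1}p^{2k}\bigl[k\,e^{-cn}+\bigl(4e(1-\e)(1+\g_0)\log p\bigr)^{k/2}p^{-2(1-\e)(1+\g_0)k}\bigr]$. The first part is at most $q^2p^{2q}e^{-cn}=e^{-cn+\mathbf{o}(n)}\to0$, using $q\log p=\mathbf{o}(n)$. In the second part, each summand equals $\bigl(4e(1-\e)(1+\g_0)\log p\cdot p^{\,4-4(1-\e)(1+\g_0)}\bigr)^{k/2}$, and the exponent $4-4(1-\e)(1+\g_0)$ is strictly negative precisely because $\e<\g_0/(1+\g_0)$; hence the base tends to $0$ and the (essentially geometric) series vanishes as $p\to\infty$. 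Thus the failure probability tends to $0$, which is the theorem.

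The computation above is routine, but two points carry the argument. The first is the uniformity of the domination $D_{\ss}\lesssim\chi^2_k$: the number of Beta factors, the shape parameters $d_j$, and the separator structure all vary with $\ss$, and it is exactly $d_j<p=\mathbf{o}(n)$ that lets a single $c$ and a single $o(1)$ serve for every $\ss\in\mathcal{E}_0$. The second---and the real crux---is the accounting of exponents: there are $\approx p^{2k}$ candidate models, while the deviance is \emph{twice} the log-likelihood gap, so the threshold $4(1+\g_0)k\log p$ yields a tail of order $p^{-2(1+\g_0)k}$; this beats $p^{2k}$ with the summable surplus $p^{-2\g_0 k}$ solely because $\g_0>0$, and the same slack absorbs the harmless factor $(1-\e)$ lost when $Y_j$ is replaced by its typical value $n$.
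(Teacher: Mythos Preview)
Your proof is correct and follows essentially the same route as the paper: Porteous's Beta factorization, stochastic domination of the log-likelihood gap by a scaled $\chi^2_m$, a chi-square tail bound, and a union bound over at most $p^{2m}$ decomposable supermodels with $m$ extra edges. The only differences are implementation details---the paper quotes the stochastic inequality $-\log B_i\le (n-c_i-1)^{-1}\chi^2_1$ directly from Porteous and uses the sharper clique bound $c_i\le\sqrt{2q}$, whereas you rederive the domination by hand via the representation $B_i=Y_i/(Y_i+Z_i^2)$ together with an event-splitting on $\{Y_i\ge(1-\e)n\}$, and you use the cruder (but still sufficient, since $p=\mathbf{o}(n)$ under (\ref{eqn_Asymp})) bound $d_j<p$.
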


\begin{proof}[Proof of the Main Theorem]
  With probability tending to $1$ as $n\rightarrow\infty$, both of the
  conclusions of Theorems~\ref{thm:1} and \ref{thm:2} hold.
  We will show that both conclusions holding simultaneously implies
  the desired result. 
  
  Observe that $\mathcal{E}\subset \mathcal{E}_0\cup\mathcal{E}_1$.
  Choose any $\ss\in\mathcal{E}\backslash\{\so\}$. If
  $\ss\in\mathcal{E}_0$, then (by Theorem 2):
  $$
  \mathrm{BIC}_{\gamma}(\ss)-\mathrm{BIC}_{\gamma}(\so)=
  -2(l_n(\hat{\Theta}(\ss))-l_n(\hat{\Theta}(\so)))+4(1+\gamma_0)(|\ss|-|\so|)\log p>0.
  $$
  If instead $\ss\in\mathcal{E}_1$, then (by Theorem 1, since $|\so|\leq q$):
  \begin{align*}
  \mathrm{BIC}_{\gamma}(\ss)-\mathrm{BIC}_{\gamma}(\so)&=
  -2(l_n(\hat{\Theta}(\ss))-l_n(\hat{\Theta}(\so)))+
  4(1+\gamma_0)(|\ss|-|\so|)\log p>0.
  \end{align*}

Therefore, for any $\ss\in\mathcal{E}\backslash\{\so\}$,
$\mathrm{BIC}_{\gamma}(\ss)>\mathrm{BIC}_{\gamma}(\so)$, which yields
the desired result.
\end{proof}

Some details on the proofs of Theorems~\ref{thm:1} and \ref{thm:2} are
given in Section~\ref{sec:appendix}.

\section{Simulations}
\label{section_Sim}

In this section, we demonstrate that the EBIC with positive $\gamma$
indeed leads to better model selection properties in practically
relevant settings.  We let $n$ grow, set $p\propto n^{\kappa}$ for
various values of $\kappa$, and apply the EBIC with
$\gamma\in\{0,0.5,1\}$ similarly to the choice made in the regression
context by \cite{chen:2008}.  As mentioned in the introduction, we
first use the graphical lasso of \cite{friedman:2008} (as implemented
in the `glasso' package for R) to define a small set of models to
consider (details given below).  From the selected set we choose the
model with the lowest EBIC.  This is repeated for $100$ trials for
each combination of values of $n,p,\gamma$ in each scaling scenario.
For each case, the average positive selection rate (PSR) and false
discovery rate (FDR) are computed.

We recall that the graphical lasso places an $\ell_1$ penalty on the
inverse covariance matrix.  Given a penalty $\rho\ge 0$, we obtain
the estimate
\begin{equation}
  \label{eq:ell1}
  \hat{\Theta}_{\rho}=\arg\min_{\Theta}
  -l_n(\Theta)+\rho\|\Theta\|_1.
\end{equation}
(Here we may define $\|\Theta\|_1$ as the sum of absolute values of
all entries, or only of off-diagonal entries; both variants are
common).  The $\ell_1$ penalty promotes zeros in the estimated inverse
covariance matrix $\hat{\Theta}_{\rho}$; increasing the penalty
yields an increase in sparsity.  The `glasso path', that is, the set
of models recovered over the full range of penalties
$\rho\in[0,\infty)$, gives a small set of models which, roughly,
include the `best' models at various levels of sparsity.  We may
therefore apply the EBIC to this manageably small set of models
(without further restriction to decomposable models).  Consistency
results on the graphical lasso require the penalty $\rho$ to
satisfy bounds that involve measures of regularity in the unknown
matrix $\Theta_0$; see \cite{ravikumar:2008}.  Minimizing the EBIC can
be viewed as a data-driven method of tuning $\rho$, one that does
not require creation of test data.

While
cross-validation does not generally have consistency properties for
model selection (see \cite{shao:1993}), it is nevertheless interesting to compare our method to cross-validation.  For the considered simulated data, we start with the set
of models from the `glasso path', as before, and then perform 100-fold 
cross-validation. For each model and each choice of training set and test
set, we fit the model to the training set and then evaluate its performance
on each sample in the test set, by measuring error in predicting each 
individual node conditional on the other nodes and then taking the sum of
the squared errors. We note that this
method is computationally much more intensive than the BIC or EBIC,
because models need to be fitted many more times.

\begin{figure}[t]
  \begin{center}
    \includegraphics[width=0.3\linewidth]{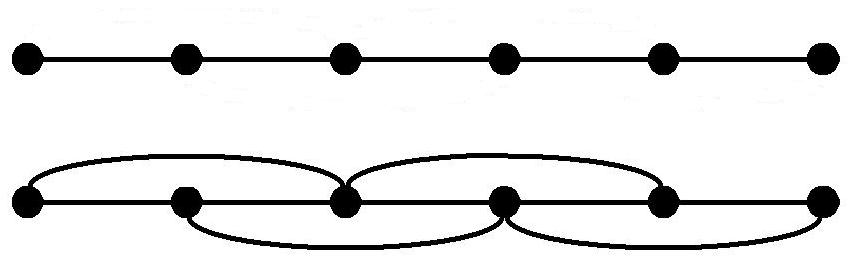}
  \end{center}
  \caption{The chain (top) and the `double chain' (bottom) on 6 nodes.} 
  \label{fig_DoubleChain}
\end{figure}

\subsection{Design}

In our simulations, we examine the EBIC as applied to the case
where the graph is a chain with node $j$ being connected to nodes
$j-1,j+1$, and to the `double chain', where node $j$ is connected to
nodes $j-2,j-1,j+1,j+2$.  Figure~\ref{fig_DoubleChain} shows examples
of the two types of graphs, which have on the order of $p$ and $2p$
edges, respectively.  For both the chain and the double chain, we
investigate four different scaling scenarios, with the exponent
$\kappa$ selected from $\{0.5,0.9,1,1.1\}$. In each 
scenario, we test $n=100,200,400,800$, and define $p\propto
n^{\kappa}$ with the constant of proportionality chosen such that
$p=10$ when $n=100$ for better comparability.

In the case of a chain, the true inverse covariance matrix $\Theta_0$
is tridiagonal with all diagonal entries $(\Theta_0)_{j,j}$ set
equal to 1, and the entries $(\Theta_0)_{j,j+1}=(\Theta_0)_{j+1,j}$
that are next to the main diagonal equal to 0.3.  For the double
chain, $\Theta_0$ has all diagonal entries equal to 1, the entries
next to the main diagonal are
$(\Theta_0)_{j,j+1}=(\Theta_0)_{j+1,j}=0.2$ and the remaining non-zero
entries are $(\Theta_0)_{j,j+2}=(\Theta_0)_{j+2,j}=0.1$.  In both
cases, the choices result in values for $\theta_0$, $\s^2_{\max}$ and
$\lambda_{\max}$ that are bounded uniformly in the matrix size $p$.

For each data set generated from $N(0,\Theta_0^{-1})$, we use the
`glasso' package \cite{friedman:2008} in R to compute the `glasso path'.  We choose 100
penalty values $\rho$ which are logarithmically evenly spaced
between $\rho_{\max}$ (the smallest value which will result in a no-edge
model) and $\rho_{\max}/100$. At each penalty
value $\rho$, we compute $\hat{\Theta}_{\rho}$ from
(\ref{eq:ell1}) and define the model $\ss_\rho$ based on this
estimate's support.  The R routine also allows us to compute the
unpenalized maximum likelihood estimate $\hat\Theta(\ss_\rho)$.  We
may then readily compute the EBIC from (\ref{eq:graphical-ebic}).
There is no guarantee that this procedure will find the model with 
the lowest EBIC along the full `glasso path', let alone among the 
space of all possible models of size $\leq q$.  Nonetheless, it
 serves as a fast way to
select a model without any manual tuning.

\subsection{Results}

{\it Chain graph:} The results for the chain graph are displayed in
Figure~\ref{fig_ChainScalingResults}. The figure shows the positive
selection rate (PSR) and false discovery rate (FDR) in the four
scaling scenarios. We observe that, for the larger sample sizes, the
recovery of the non-zero coefficients is perfect or nearly perfect for
all three values of $\gamma$; however, the FDR rate is noticeably
better for the positive values of $\gamma$, especially for higher
scaling exponents $\kappa$. Therefore, for moderately large $n$, the
EBIC with $\gamma=0.5$ or $\gamma=1$ performs very well, while the
ordinary $\mathrm{BIC}_{0}$ produces a non-trivial amount of false
positives. For 100-fold cross-validation, while the PSR is initially
slightly higher, the growing FDR demonstrates the extreme inconsistency
of this method in the given setting.

{\it Double chain graph:} The results for the double chain graph are
displayed in Figure~\ref{fig_DoubleChainScalingResults}. In each of
the four scaling scenarios for this case, we see a noticeable decline
in the PSR as $\gamma$ increases.  Nonetheless, for each value of
$\gamma$, the PSR increases as $n$ and $p$ grow.  Furthermore, the FDR
for the ordinary $\mathrm{BIC}_{0}$ is again noticeably higher than
for the positive values of $\gamma$, and in the scaling scenarios
$\kappa\ge 0.9$, the FDR for $\mathrm{BIC}_{0}$ is actually increasing
as $n$ and $p$ grow, suggesting that asymptotic consistency may not
hold in these cases, as is supported by our theoretical results. 100-fold
cross-validation shows significantly better PSR than the BIC and EBIC methods,
but the FDR is again extremely high and increases quickly as the model grows,
which shows the unreliability of cross-validation in this setting. Similarly to what Chen and Chen \cite{chen:2008} conclude for the regression case,
it appears that the EBIC with parameter $\gamma=0.5$ performs well.
Although the PSR is necessarily lower than with $\gamma=0$, the FDR is
quite low and decreasing as $n$ and $p$ grow, as desired.

For both types of simulations, the results demonstrate the trade-off inherent in choosing $\gamma$
in the finite (non-asymptotic) setting. For low values of $\gamma$, we are more likely to
obtain a good (high) positive selection rate. For higher values
of $\gamma$, we are more likely to obtain a good (low) false discovery rate. (In the proofs given in Section~\ref{sec:appendix}, this corresponds to assumptions~(\ref{eqn_Assumption1}) and~(\ref{eqn_Assumption2})). However, asymptotically, the conditions~(\ref{eqn_Asymp}) guarantee consistency, meaning that the trade-off becomes irrelevant for large $n$ and $p$. In the finite case, $\gamma=0.5$ seems to be a good compromise in simulations, but the question of determining the best value of $\gamma$ in general settings is an open question. Nonetheless, this method offers guaranteed asymptotic consistency
for (known) values of $\gamma$ depending only on $n$ and $p$.

\begin{figure}[t]
\begin{center}
              \includegraphics[width=0.48\linewidth]{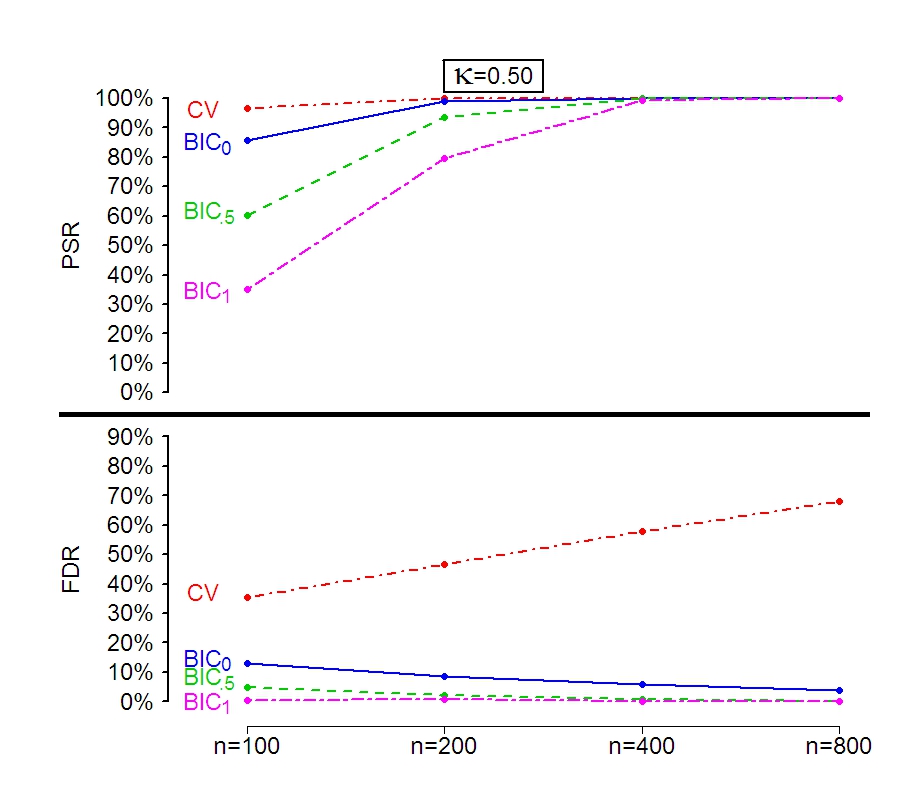}
              \includegraphics[width=0.48\linewidth]{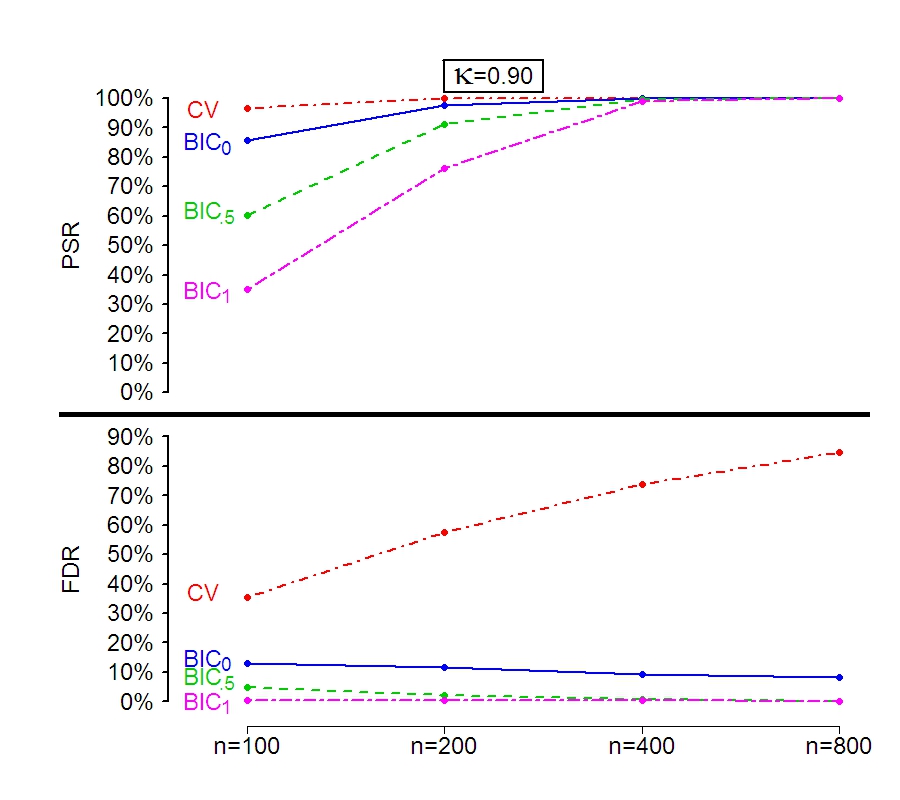}
              \includegraphics[width=0.48\linewidth]{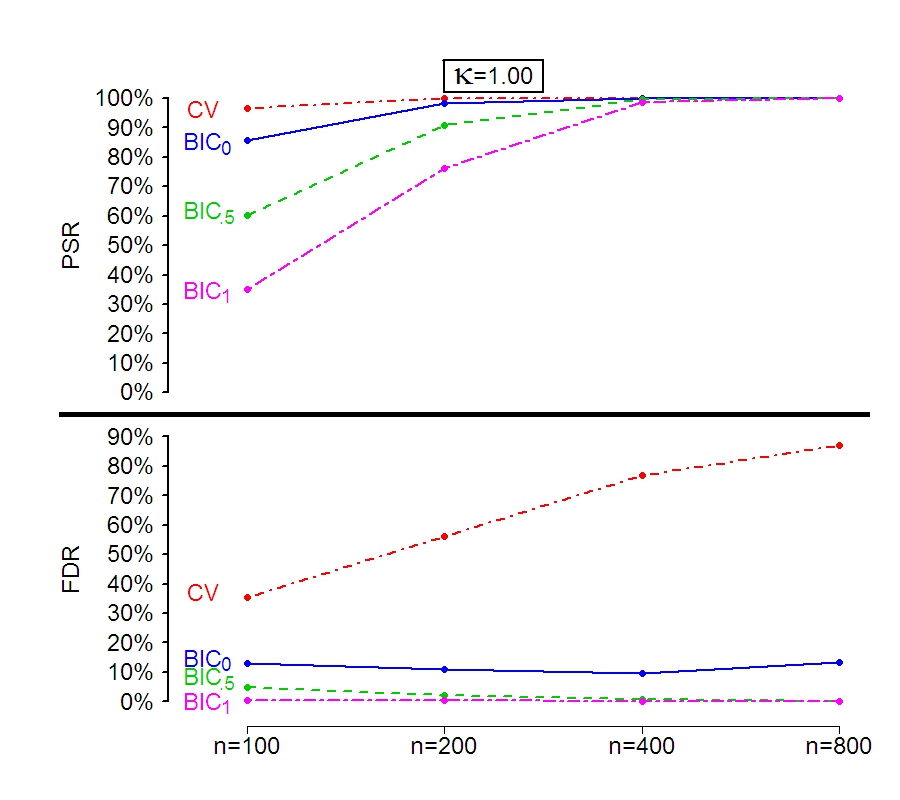}
              \includegraphics[width=0.48\linewidth]{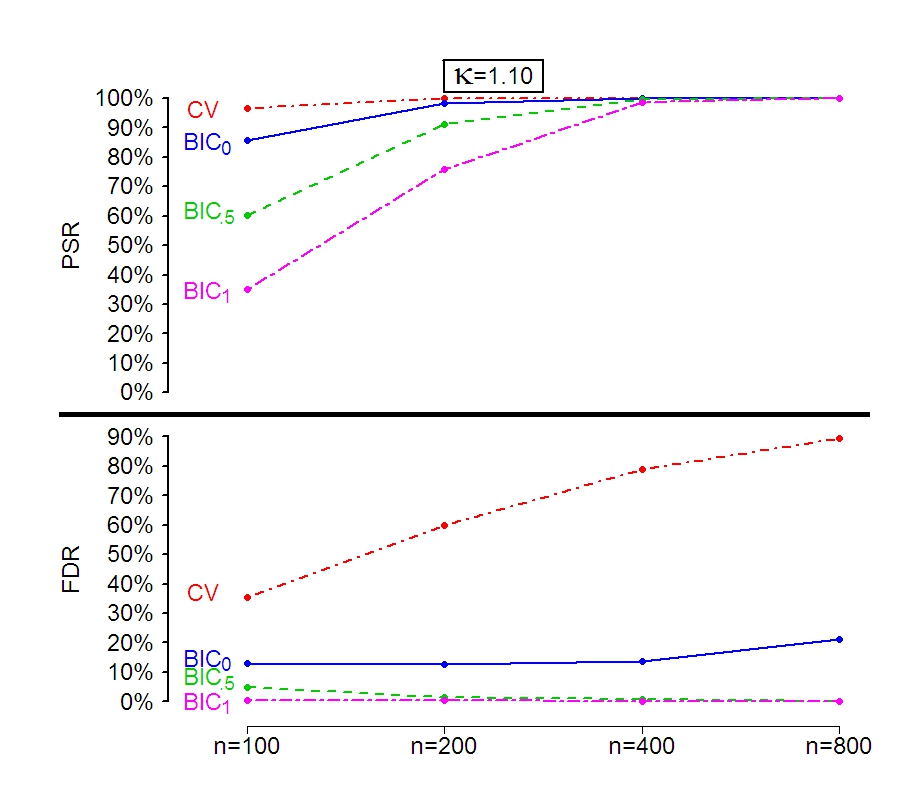}
\end{center}
\caption{Simulation results when the true graph is a chain.}
\label{fig_ChainScalingResults}
\end{figure}

\begin{figure}[t]
\begin{center}
              \includegraphics[width=0.48\linewidth]{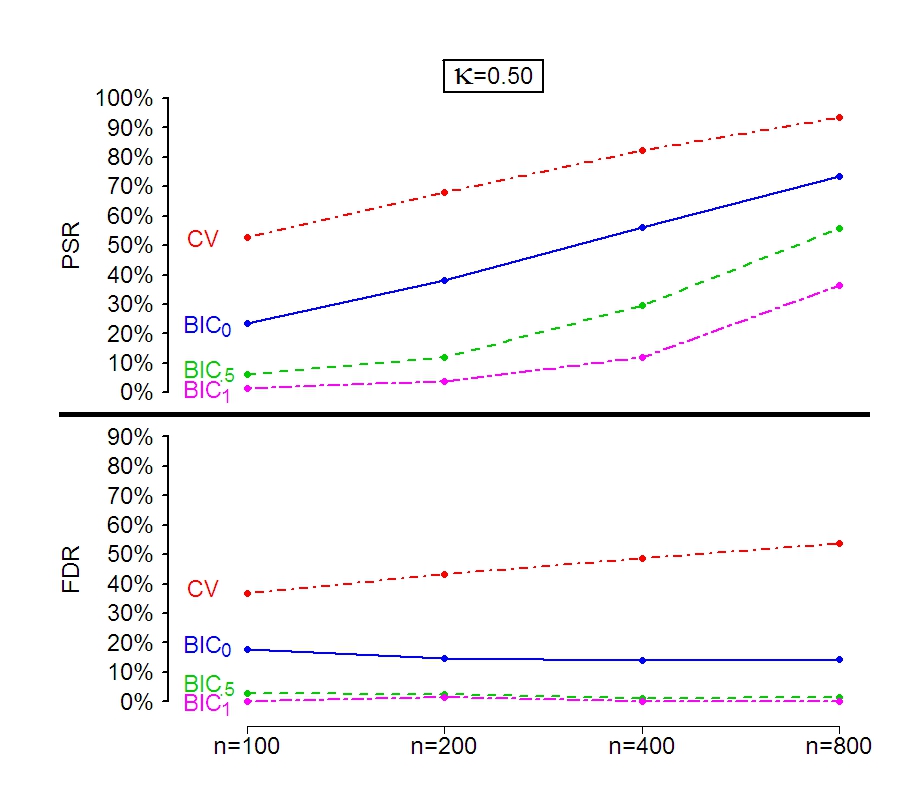}
              \includegraphics[width=0.48\linewidth]{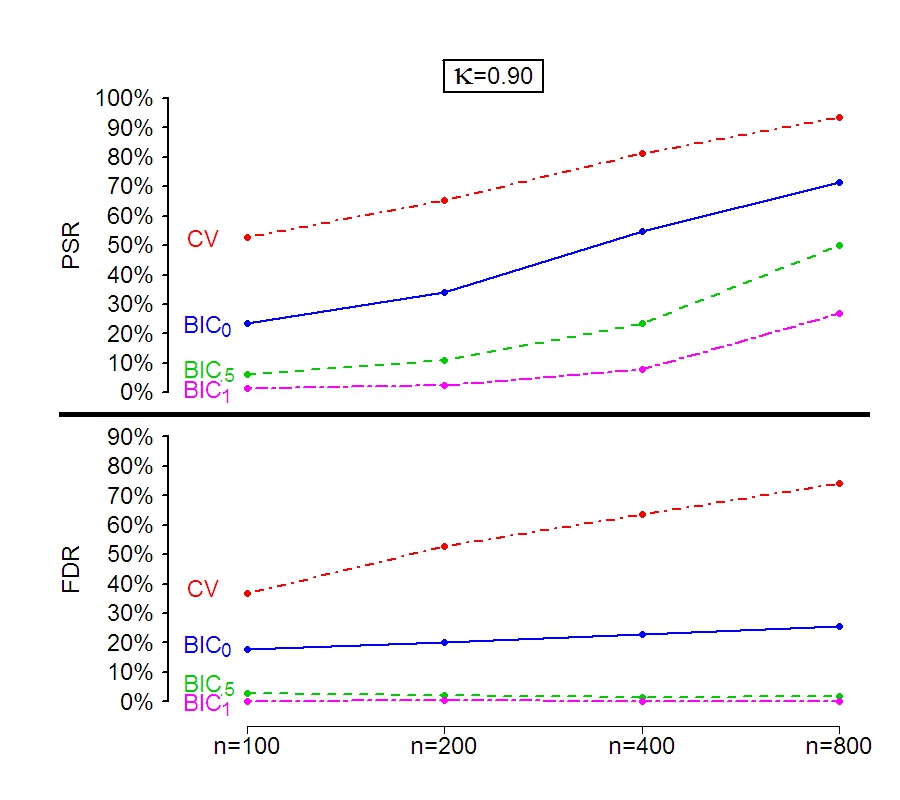}
              \includegraphics[width=0.48\linewidth]{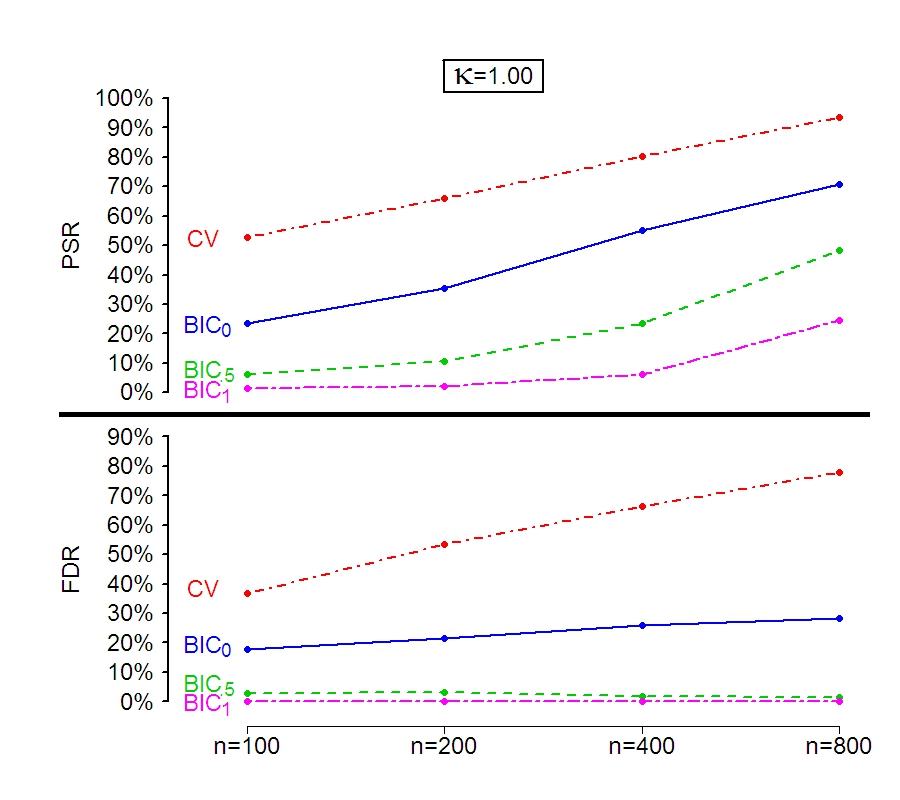}
              \includegraphics[width=0.48\linewidth]{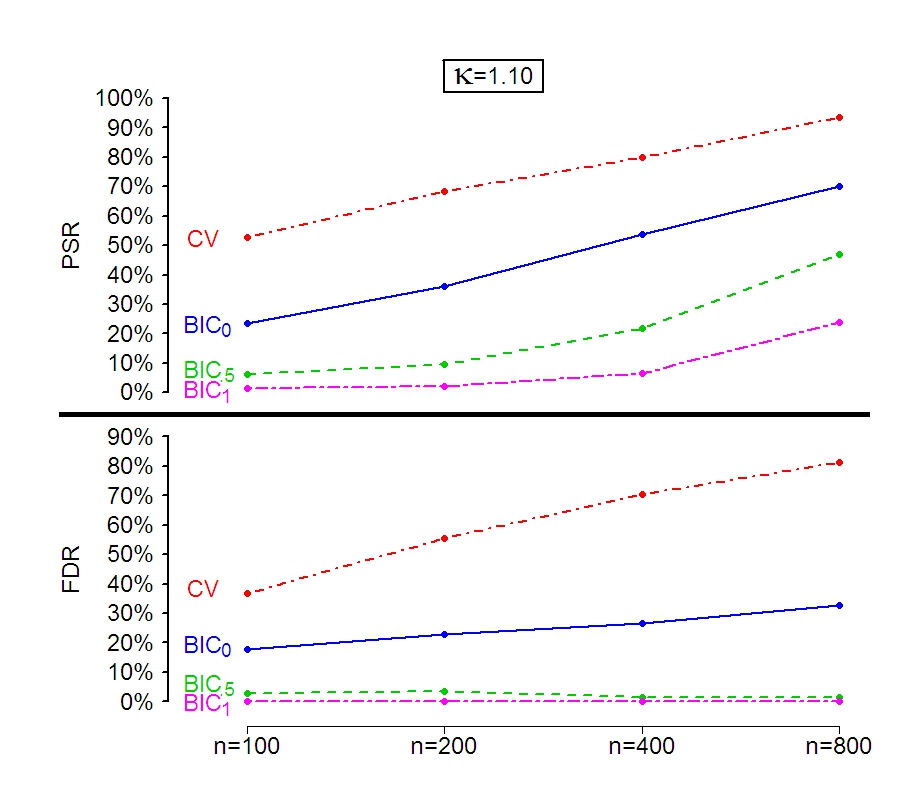}
\end{center}
\caption{Simulation results when the true graph is a `double chain'.}
\label{fig_DoubleChainScalingResults}
\end{figure}

\section{Discussion}
\label{section_Disc}

We have proposed the use of an extended Bayesian information criterion
for multivariate data generated by sparse graphical models.  Our main
result gives a specific scaling for the number of variables $p$, the
sample size $n$, the bound on the number of edges $q$, and other
technical quantities relating to the true model, which will ensure
asymptotic consistency.  Our simulation study demonstrates the the
practical potential of the extended BIC, particularly as a way to tune
the graphical lasso. The results show that the extended BIC with
positive $\gamma$ gives strong improvement in false discovery rate
over the classical BIC, and even more so over cross-validation,
while showing comparable positive selection
rate for the chain, where all the signals are fairly strong, and
noticeably lower, but steadily increasing, positive selection rate for
the double chain with a large number of weaker signals.

\section{Proofs}
\label{sec:appendix}

We now sketch proofs of non-asymptotic versions of
Theorems~\ref{thm:1} and \ref{thm:2}, which are formulated as
Theorems~\ref{thm:1-nonasy} and \ref{thm:2-nonasy}. (Full technical details are given in the Appendix.) We also give a
non-asymptotic formulation of the Main Theorem; see
Theorem~\ref{mthm:nonasy}.  
In the non-asymptotic approach, we treat all
quantities as fixed (e.g.\ $n,p,q$, etc.) and state precise
assumptions on those quantities, and then give an explicit lower bound
on the probability of the extended BIC recovering the model $\so$
exactly. We do this to give an intuition for the magnitude of the
sample size $n$ necessary for a good chance of exact recovery in a
given setting but due to the proof techniques, the resulting
implications about sample size are extremely conservative.

\subsection{Preliminaries}

We begin by stating two lemmas that are used in the proof of the main
result, but are also more generally interesting as tools for precise
bounds on Gaussian and chi-square distributions. First, Cai \cite[Lemma 4]{cai:2002} proves the following chi-square
bound. For any $n\geq 1,\lambda>0$,
$$
P\{\chi^2_n>n(1+\lambda)\}\leq \frac{1}{\lambda\sqrt{\pi
    n}}e^{-\frac{n}{2}(\lambda-\log(1+\lambda))}.
$$
We can give an analagous left-tail upper bound. The proof is 
similar to Cai's proof and omitted here. We will refer to these two
bounds together as (CSB).

\begin{lemma}
  \label{lem:1}
  For any $\lambda>0$, for $n$ such that $n\geq 4\lambda^{-2}+1$,
  \[
  P\{\chi^2_n<n(1-\lambda)\}\leq
  \frac{1}{\lambda\sqrt{\pi(n-1)}}e^{\frac{n-1}{2}(\lambda+\log(1-\lambda))}.
  \]
\end{lemma}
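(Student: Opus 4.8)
The plan is to mimic Cai's Chernoff-type argument for the right tail, but tracking the left tail of $\chi^2_n$. Recall that for $t<1/2$ the moment generating function of $\chi^2_n$ is $E[e^{t\chi^2_n}]=(1-2t)^{-n/2}$, which is also finite (and useful) for all negative $t$. For a one-sided \emph{lower} deviation we use a negative exponential tilt: for any $s>0$,
\[
P\{\chi^2_n<n(1-\lambda)\}
= P\{e^{-s\chi^2_n}>e^{-sn(1-\lambda)}\}
\le e^{sn(1-\lambda)}\,E[e^{-s\chi^2_n}]
= e^{sn(1-\lambda)}(1+2s)^{-n/2}.
\]
Optimizing the exponent $sn(1-\lambda)-\frac n2\log(1+2s)$ over $s>0$ gives $1+2s=1/(1-\lambda)$, i.e. $s=\lambda/(2(1-\lambda))>0$, and substituting back yields the clean exponent
\[
\frac n2\bigl[\lambda+\log(1-\lambda)\bigr],
\]
which is negative for $\lambda\in(0,1)$ since $\log(1-\lambda)<-\lambda$. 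This already proves a bound of the form $P\{\chi^2_n<n(1-\lambda)\}\le e^{\frac n2(\lambda+\log(1-\lambda))}$; the remaining work is to sharpen the polynomial prefactor from $1$ down to $\frac{1}{\lambda\sqrt{\pi(n-1)}}$ with $n-1$ in the exponent, exactly as in Cai's refinement.

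To get the prefactor I would follow Cai's more delicate route rather than the crude Markov step: write the left-tail probability as an explicit integral against the $\chi^2_n$ density $f_n(x)=\frac{x^{n/2-1}e^{-x/2}}{2^{n/2}\Gamma(n/2)}$, and bound it by pulling out the density's value (or a monotone majorant of the exponential-tilt integrand) at the boundary point $x=n(1-\lambda)$, then estimating the residual integral by comparison with a Gaussian/Laplace-type integral. Using a Stirling bound on $\Gamma(n/2)$ and the elementary inequality controlling $\int_0^{n(1-\lambda)} (\text{tilted density})\,dx$ produces the $\frac{1}{\sqrt{\pi(n-1)}}$ factor and the shift from $n$ to $n-1$ in the exponent; the factor $1/\lambda$ comes from bounding a geometric-type tail of the tilted integrand, just as in the right-tail case. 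The hypothesis $n\ge 4\lambda^{-2}+1$ is exactly what is needed to ensure the Laplace approximation step is valid — i.e. that the boundary point $n(1-\lambda)$ is far enough (at least a couple of standard deviations, since $\mathrm{sd}(\chi^2_n)=\sqrt{2n}$ and we need $n\lambda\gtrsim\sqrt n$) from the mean $n$ for the tail estimate to dominate, and to keep the prefactor manipulations (e.g. replacing $n$ by $n-1$) harmless.

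The main obstacle is not the exponential rate — that falls out immediately from the negative exponential tilt and is arguably the whole content — but rather reproducing Cai's polynomial prefactor cleanly. The delicate points are: (i) choosing the right monotone majorant so that the residual integral telescopes into a geometric series giving the $1/\lambda$ factor; (ii) applying Stirling to $\Gamma(n/2)$ with explicit (non-asymptotic) error control so that the $\sqrt{\pi(n-1)}$ emerges with the correct constant; and (iii) verifying that under $n\ge 4\lambda^{-2}+1$ every inequality used is genuinely valid, not just asymptotically. Since the paper explicitly says the proof is "similar to Cai's proof and omitted," I would present the exponential-tilt computation in full and then remark that the prefactor refinement is a verbatim adaptation of the argument in \cite{cai:2002}, deferring the bookkeeping to the Appendix.
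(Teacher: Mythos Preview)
Your Chernoff (negative exponential tilt) computation correctly recovers the exponential rate $\exp\{\tfrac n2(\lambda+\log(1-\lambda))\}$, and that step is sound. However, the paper's proof takes a completely different route, and your plan for obtaining the prefactor $\tfrac{1}{\lambda\sqrt{\pi(n-1)}}$ rests on a mischaracterization of Cai's argument, so deferring to a ``verbatim adaptation of Cai'' does not close the gap.

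What the paper actually does (and what Cai does for the right tail) is not a Laplace/tilted-integrand estimate at all. For even $n$ it iterates the exact two-step recursion $P\{\chi^2_n<x\}=-2f_n(x)+P\{\chi^2_{n-2}<x\}$ down to $\chi^2_2$, yielding the closed form
\[
P\{\chi^2_n<x\}=e^{-x/2}\sum_{k\ge n/2}\frac{(x/2)^k}{k!}.
\]
With $x=n(1-\lambda)$ one bounds $\sum_{k\ge n/2}(n(1-\lambda)/2)^k/k!\le \frac{(n/2)^{n/2}}{(n/2)!}\sum_{k\ge n/2}(1-\lambda)^k$, so the $1/\lambda$ is literally the sum of a geometric series, and Stirling on $(n/2)!$ produces $1/\sqrt{\pi n}$. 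This already gives the bound with $n$ (not $n-1$) everywhere, for even $n$, with no hypothesis on $n$ needed.

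The shift $n\mapsto n-1$ is \emph{not} Stirling bookkeeping: it comes from handling odd $n$ via a separate preliminary lemma showing $P\{\chi^2_{n}<n(1-\lambda)\}\le P\{\chi^2_{n-1}<(n-1)(1-\lambda)\}$, proved by comparing the densities of $\chi^2_n/n$ and $\chi^2_{n-1}/(n-1)$ on $[0,1-\lambda]$. The hypothesis $n\ge 4\lambda^{-2}+1$ is used exactly there, to force the density ratio below $1$; it plays no role in the even case and is not about validating a Laplace approximation. Your proposal therefore misidentifies both the mechanism for the prefactor and the role of the hypothesis. To complete the argument you should either reproduce the recursion-plus-geometric-series computation, or supply a genuine, self-contained sharpening of the Markov step (which your sketch does not do).
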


Second, we give a distributional result about the sample correlation
when sampling from a bivariate normal distribution.

\begin{lemma}
  \label{lem:2} Suppose $(X_1,Y_1),\dots,(X_n,Y_n)$ are independent
  draws from a bivariate normal distribution with zero mean, variances
  equal to one and covariance $\rho$.  Then the following
  distributional equivalence holds, where $A$ and $B$ are independent
  $\chi^2_n$ variables:
  $$
  \sum_{i=1}^n
  (X_iY_i-\rho)\;\stackrel{\mathcal{D}}{=}\;\frac{1+\rho}{2}(A-n)-\frac{1-\rho}{2}(B-n).
  $$
\end{lemma}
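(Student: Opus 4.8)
The plan is to diagonalize the quadratic form $\sum_i X_iY_i$ by an orthogonal change of coordinates on the bivariate normal pairs, reducing the sum of products of correlated normals to a difference of two independent sums of squares. First I would write each pair $(X_i,Y_i)$ in terms of its ``sum and difference'' coordinates $U_i = (X_i+Y_i)/2$ and $V_i = (X_i-Y_i)/2$, so that $X_iY_i = U_i^2 - V_i^2$. Since $(X_i,Y_i)$ is bivariate normal with unit variances and correlation $\rho$, the pair $(U_i,V_i)$ is again bivariate normal with mean zero, and a direct computation of the covariance matrix gives $\mathrm{Var}(U_i) = (1+\rho)/2$, $\mathrm{Var}(V_i) = (1-\rho)/2$, and $\mathrm{Cov}(U_i,V_i) = 0$; hence $U_i$ and $V_i$ are independent. (This is the standard fact that for a bivariate normal with equal variances, the sum and difference are independent.)

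Next I would collect the samples: writing $A = \sum_i \bigl(U_i/\sqrt{(1+\rho)/2}\bigr)^2$ and $B = \sum_i \bigl(V_i/\sqrt{(1-\rho)/2}\bigr)^2$, the vectors $(U_1,\dots,U_n)$ and $(V_1,\dots,V_n)$ are independent (independence across $i$ is given, independence of $U$ from $V$ within each $i$ was just shown), so $A$ and $B$ are independent $\chi^2_n$ variables. Then
$$
\sum_{i=1}^n X_iY_i \;=\; \sum_{i=1}^n (U_i^2 - V_i^2) \;=\; \frac{1+\rho}{2}A \;-\; \frac{1-\rho}{2}B,
$$
and subtracting $n\rho = \tfrac{1+\rho}{2}n - \tfrac{1-\rho}{2}n$ from both sides yields exactly the claimed identity
$$
\sum_{i=1}^n (X_iY_i - \rho) \;\stackrel{\mathcal{D}}{=}\; \frac{1+\rho}{2}(A-n) \;-\; \frac{1-\rho}{2}(B-n).
$$
Here the identity is genuine equality of random variables once $A,B$ are defined as above; the distributional-equivalence symbol is used only because the lemma asserts the existence of \emph{some} independent $\chi^2_n$ pair, which is what we have exhibited.

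There is no serious obstacle here; the only point requiring a small check is the degenerate case $\rho = \pm 1$, where one of $U_i,V_i$ is identically zero and the corresponding ``$\chi^2_n$'' factor multiplies a zero coefficient, so the statement still holds trivially (and in the applications $\rho$ is a correlation strictly between $-1$ and $1$). The mildly delicate step — to be stated carefully rather than computed at length — is verifying that $\mathrm{Cov}(U_i,V_i)=0$ relies on the variances of $X_i$ and $Y_i$ being \emph{equal}; this is exactly where the normalization to unit variances in the lemma's hypothesis is used.
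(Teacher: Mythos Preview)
Your proof is correct and is essentially the same argument as the paper's: the paper constructs $X_i,Y_i$ from independent standard normals $A_i,B_i$ via $X_i=\sqrt{(1+\rho)/2}\,A_i+\sqrt{(1-\rho)/2}\,B_i$ and $Y_i=\sqrt{(1+\rho)/2}\,A_i-\sqrt{(1-\rho)/2}\,B_i$, whereas you run the same orthogonal change of variables in reverse by setting $U_i=(X_i+Y_i)/2$ and $V_i=(X_i-Y_i)/2$. Your $U_i/\sqrt{(1+\rho)/2}$ and $V_i/\sqrt{(1-\rho)/2}$ are precisely the paper's $A_i,B_i$, so the two proofs coincide.
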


\begin{proof}
  Let $A_1,B_1,A_2,B_2,\dots,A_n,B_n$ be independent standard normal
  random variables. Define:
  $$
  X_i=\sqrt{\frac{1+\rho}{2}}A_i+\sqrt{\frac{1-\rho}{2}}B_i; \ 
  Y_i=\sqrt{\frac{1+\rho}{2}}A_i-\sqrt{\frac{1-\rho}{2}}B_i; \ 
  A=\sum_{i=1}^n A_i^2; \ B=\sum_{i=1}^n B_i^2.
  $$
  Then the variables
  $X_1,Y_1,X_2,Y_2,\dots,X_n,Y_n$ have the desired joint distribution,
  and $A,B$ are independent $\chi^2_n$ variables. The claim follows from writing $\sum_i X_iY_i$ in terms of $A$ and $B$.\end{proof}

\subsection{Non-asymptotic versions of the theorems}

We assume the following two conditions, where $\e_0,\e_1>0$, $C\geq
\s^2_{\max}\lambda_{\max}$, $\kappa=\log_n p$, and
$\gamma_0=\gamma-(1-\frac{1}{4\kappa})$:
\begin{align}
  &
  \frac{(p+2q)\log p}{n}\times\frac{\lambda^2_{\max}}{\theta_0^2}\leq
  \frac{1}{3200\max\{1+\gamma_0,\left(1+\frac{\epsilon_1}{2}\right)C^2\}}
  \label{eqn_Assumption1}\\
  &
  2(\sqrt{1+\gamma_0}-1)-\frac{\log \log
    p+\log(4\sqrt{1+\gamma_0})+1}{2\log p}\geq \e_0
  \label{eqn_Assumption2}
\end{align}

\begin{theorem}
  \label{thm:1-nonasy} Suppose assumption (\ref{eqn_Assumption1}) holds.
  Then with probability at least $1-\frac{1}{\sqrt{\pi\log
      p}}p^{-\e_1}$, for all $\ss\not\supset\so$ with $|\ss|\leq q$,
  $$
  l_n(\Theta_0)-l_n(\hat{\Theta}(\ss))>2q(\log p)(1+\gamma_0).
  $$
\end{theorem}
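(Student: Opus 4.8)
The plan is to compare $l_n(\Theta_0)$ directly with $l_n(\hat\Theta(\ss))$ through a second-order (``Taylor'') expansion of the Gaussian log-likelihood at the truth, in the spirit of Chen and Chen's GLM treatment \cite{chen:2010} but in the $\log\det$ parametrization. Fix $\ss$ with $\ss\not\supset\so$ and $|\ss|\le q$ (assumption~(\ref{eqn_Assumption1}) forces $p<n$, so $S\succ0$ and all maximum likelihood estimates exist) and write $\hat\Theta=\hat\Theta(\ss)$. Since $\ss\not\supset\so$ there is an edge $\ee=\{j,k\}\in\so\setminus\ss$; as $\hat\Theta$ maximizes the likelihood over matrices supported on $\Delta\cup\ss$ we have $\hat\Theta_{jk}=\hat\Theta_{kj}=0$ while $|(\Theta_0)_{jk}|\ge\theta_0$, so $\|\hat\Theta-\Theta_0\|_F\ge\sqrt2\,\theta_0$. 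Writing $g(\Theta)=\frac n2[\log\det\Theta-\trace(\Theta_0^{-1}\Theta)]$ for the expected log-likelihood (maximized at $\Theta_0$), the key identity is
\[
l_n(\Theta_0)-l_n(\hat\Theta)=\underbrace{\bigl(g(\Theta_0)-g(\hat\Theta)\bigr)}_{=:D_1\,\ge\,0}+\underbrace{\tfrac n2\trace\!\bigl((S-\Theta_0^{-1})(\hat\Theta-\Theta_0)\bigr)}_{=:D_2},
\]
a deterministic ``divergence'' term $D_1\ge0$ plus a sampling fluctuation $D_2$; the goal is $D_1-|D_2|>2q(\log p)(1+\gamma_0)$, simultaneously over all such $\ss$.

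First I would lower bound $D_1$. With $m_1,\dots,m_p>0$ the eigenvalues of $\Theta_0^{-1/2}\hat\Theta\Theta_0^{-1/2}$ one has $D_1=\frac n2\sum_i(m_i-1-\log m_i)$ with nonnegative summands, $\sum_i(m_i-1)^2=\|\Theta_0^{-1/2}(\hat\Theta-\Theta_0)\Theta_0^{-1/2}\|_F^2$, and $\|\hat\Theta-\Theta_0\|_F\le\lambda_{\max}\sqrt{\sum_i(m_i-1)^2}$. Using $t-1-\log t\ge\tfrac14(t-1)^2$ for $0<t\le2$, $\ge\tfrac14(t-1)$ for $t\ge2$, and $\sum a_i^2\le(\sum a_i)^2$ on the indices with $m_i>2$, one gets $D_1\ge\tfrac n8(Q_1+Q_2)$ together with $\|\hat\Theta-\Theta_0\|_F\le\lambda_{\max}(\sqrt{Q_1}+Q_2)$, where $Q_1=\sum_{m_i\le2}(m_i-1)^2$ and $Q_2=\sum_{m_i>2}(m_i-1)\ge0$. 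The crucial point is that only $\lambda_{\max}$ enters here, never a condition number: a large eigenvalue of $\hat\Theta$ produces a proportionally large term in the $\log\det$ divergence, so $D_1$ is quadratic in $\|\hat\Theta-\Theta_0\|_F/\lambda_{\max}$ near the truth and at least linear otherwise. I expect extracting this $\ss$-uniform lower bound on $D_1$, robust to a possibly ill-conditioned false-model MLE, to be the main obstacle.

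Next I would bound $D_2$. Since $\hat\Theta-\Theta_0$ is supported on $\Delta\cup\ss\cup\so$, at most $p+2q+2|\so|\le2(p+2q)$ entries, $|D_2|\le\frac n2\,\|(S-\Theta_0^{-1})|_{\Delta\cup\ss\cup\so}\|_F\,\|\hat\Theta-\Theta_0\|_F\le\frac n2\,\beta\,\|\hat\Theta-\Theta_0\|_F$ with $\beta:=\sqrt{2(p+2q)}\,t_n$ and $t_n:=\max_{a,b}|(S-\Theta_0^{-1})_{ab}|$, which does not depend on $\ss$. The scalar $t_n$ is controlled by the chi-square bounds: $(S-\Theta_0^{-1})_{jj}$ equals $(\Theta_0^{-1})_{jj}$ times a centered scaled $\chi^2_n$, and by Lemma~\ref{lem:2} each off-diagonal $(S-\Theta_0^{-1})_{jk}$ equals $\sigma_j\sigma_k$ times a difference of two centered scaled $\chi^2_n$'s with weights $\tfrac{1\pm\rho}2$; hence (CSB) bounds $|(S-\Theta_0^{-1})_{ab}|$ for a fixed pair, and a union bound over the fewer than $p^2$ entries, with $\lambda$ of order $\sqrt{(\log p)/n}$ chosen so that $p^2e^{-\frac n2(\lambda-\log(1+\lambda))}\lesssim p^{-\e_1}$, yields $t_n\le\s^2_{\max}\lambda$ on an event of probability at least $1-\frac1{\sqrt{\pi\log p}}p^{-\e_1}$ (the $1/\sqrt{\pi\log p}$ coming from the $1/(\lambda\sqrt{\pi n})$ prefactor in (CSB) at this scale of $\lambda$).

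On that event, $\s^2_{\max}\lambda_{\max}\le C$ and assumption~(\ref{eqn_Assumption1}) — its $(1+\tfrac{\e_1}2)C^2$ part, since $t_n^2$ is of order $\s^4_{\max}(1+\tfrac{\e_1}2)(\log p)/n$ — give $\beta\lambda_{\max}\le\theta_0/(10\lambda_{\max})\le\tfrac1{10}$. Then $D_1-|D_2|\ge\tfrac n8\sqrt{Q_1}\,(\sqrt{Q_1}-4\beta\lambda_{\max})+\tfrac n8\,Q_2\,(1-4\beta\lambda_{\max})$, and a short case split on whether $Q_1\ge\theta_0^2/\lambda_{\max}^2$ (using $Q_1+Q_2^2\ge\|\hat\Theta-\Theta_0\|_F^2/\lambda_{\max}^2\ge2\theta_0^2/\lambda_{\max}^2$ in the complementary case) shows $l_n(\Theta_0)-l_n(\hat\Theta)=D_1+D_2\ge c\,n\theta_0^2/\lambda_{\max}^2$ for an absolute constant $c>0$, in every case. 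Finally, the $1+\gamma_0$ part of~(\ref{eqn_Assumption1}) together with $q\log p\le(p+2q)\log p$ gives $c\,n\theta_0^2/\lambda_{\max}^2>2q(\log p)(1+\gamma_0)$; the constant $3200$ is simply chosen large enough to absorb $c$, the $\sqrt2$, the factor $2$ in $2(p+2q)$, and all the $\tfrac12$'s above. Since this final lower bound is deterministic and independent of $\ss$, it holds for every $\ss\in\mathcal{E}_1$ on the single good event, which proves the theorem.
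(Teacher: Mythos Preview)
Your proof is correct but takes a genuinely different route from the paper's own argument. The paper exploits concavity of $l_n$ to reduce to $\Theta$ on the sphere $\|\Theta-\Theta_0\|_F=\theta_0$ (with support in $\Delta\cup\so\cup\ss$): since $\hat\Theta(\ss)$ lies outside that sphere, concavity along the segment from $\Theta_0$ to $\hat\Theta(\ss)$ forces $l_n(\Theta_0)-l_n(\hat\Theta(\ss))$ to be at least as large as at the crossing point. On the sphere a single second-order Taylor expansion suffices, because any $\tilde\Theta$ between $\Theta_0$ and the sphere has largest eigenvalue at most $\lambda_{\max}+\theta_0\le 2\lambda_{\max}$, giving the clean Hessian lower bound $H_n(\tilde\Theta)\succeq\tfrac{n}{2}(2\lambda_{\max})^{-2}I$ with no need to control where $\hat\Theta(\ss)$ actually lands. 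The linear (score) term is controlled exactly as you do, via Lemma~\ref{lem:2} and (CSB).

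You instead work with $\hat\Theta(\ss)$ directly through the Bregman decomposition $D_1+D_2$, and accommodate possibly large eigenvalues of $\hat\Theta(\ss)$ via the linear lower bound $t-1-\log t\ge\tfrac14(t-1)$ for $t\ge2$. This costs you the $Q_1/Q_2$ case split but avoids the concavity trick entirely and makes the divergence structure explicit. Both routes bottleneck on the same entrywise concentration of $S-\Theta_0^{-1}$ and arrive at the same conclusion $l_n(\Theta_0)-l_n(\hat\Theta(\ss))\ge c\,n\theta_0^2/\lambda_{\max}^2$ on one $\ss$-independent event; the paper's reduction is shorter and gives cleaner constants, while your argument would transfer more readily to losses that are not globally concave. (One minor point: with the paper's constant $24$ in the score bound, your inequality $\beta\lambda_{\max}\le\theta_0/(10\lambda_{\max})$ comes out closer to $\theta_0/(5\lambda_{\max})$, but this is harmless for the case split.)
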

\begin{proof}
  We sketch a proof along the lines of the proof of Theorem 2
  in \cite{chen:2010}, using Taylor series centered at the true
  $\Theta_0$ to approximate the likelihood at $\hat{\Theta}(\ss)$.
  The score and the negative Hessian of the log-likelihood
  function in (\ref{eq:loglik}) are
  \begin{align*}
    s_n(\Theta) &=
    \frac{d}{d\Theta}l_n(\Theta)=\frac{n}{2}\left(\Theta^{-1}-S\right),&
    H_n(\Theta)
    &=-\frac{d}{d\Theta}s_n(\Theta)=\frac{n}{2}\Theta^{-1}\otimes\Theta^{-1}.
  \end{align*}  
  Here, the symbol $\otimes$ denotes the Kronecker product of
  matrices. Note that, while we require $\Theta$ to be symmetric
  positive definite, this is not reflected in the derivatives above.
  We adopt this convention for the notational convenience in the
  sequel.
  
  Next, observe that $\hat{\Theta}(\ss)$ has support on
  $\Delta\cup\so\cup\ss$, and that by definition of $\theta_0$, we
  have the lower bound $|\hat{\Theta}(\ss)-\Theta_0|_F\geq\theta_0$
  in terms of the Frobenius norm.  By concavity of the log-likelihood
  function, it suffices to show that the desired inequality holds for
  all $\Theta$ with support on $\Delta\cup\so\cup\ss$ with
  $|\Theta-\Theta_0|_F=\theta_0$.  By Taylor expansion, for some
  $\tilde{\Theta}$ on the path from $\Theta_0$ to $\Theta$, we have:
  $$
  l_n(\Theta)-l_n(\Theta_0)=\mathrm{vec}(\Theta-\Theta_0)^Ts_n(\Theta_0)-\frac{1}{2}\mathrm{vec}(\Theta-\Theta_0)^TH_n(\tilde{\Theta})\mathrm{vec}(\Theta-\Theta_0).
  $$
  Next, by (CSB) and Lemma~\ref{lem:2}, with probability at least
  $1-\frac{1}{\sqrt{\pi\log p}}e^{-\e_1\log p}$, the following bound
  holds for all edges $\ee$ in the complete graph (we omit the
  details):  
  $$
  (s_n(\Theta_0))_{\ee}^2\leq 6\s^4_{\max}(2+\e_1)n\log p.
  $$
  Now assume that this bound holds for all edges. Fix some $\ss$ as
  above, and fix $\Theta$ with support on $\Delta\cup\so\cup\ss$, with
  $|\Theta-\Theta_0|=\theta_0$. Note that the support has at most
  $(p+2q)$ entries.  Therefore,
  $$
  |\mathrm{vec}(\Theta-\Theta_0)^Ts_n(\Theta_0)|^2\leq\theta_0^2(p+2q)\times
  6\s^4_{\max}(2+\e_1)n\log p.
  $$
  Furthermore, the eigenvalues of $\Theta$ are bounded by
  $\lambda_{\max}+\theta_0\leq 2\lambda_{\max}$, and so by properties
  of Kronecker products, the minimum eigenvalue of
  $H_n(\tilde{\Theta})$ is at least
  $\frac{n}{2}(2\lambda_{\max})^{-2}$.  We conclude that  
  $$
  l_n(\Theta)-l_n(\Theta_0)\leq\sqrt{\theta_0^2(p+2q)\times
    6\s^4_{\max}(2+\e_1)n\log p}-\frac{1}{2}\theta_0^2\times
  \frac{n}{2}(2\lambda_{\max})^{-2}.
  $$
  Combining this bound with our assumptions above, we obtain the
  desired result.
\end{proof}

\begin{theorem}
  \label{thm:2-nonasy} 
  Suppose additionally that assumption (\ref{eqn_Assumption2}) holds
  (in particular, this implies that $\gamma>1-\frac{1}{4\kappa}$).
  Then with probability at least $1-\frac{1}{4\sqrt{\pi}\log
    p}\frac{p^{-\e_0}}{1-p^{-\e_0}}$, for all decomposable models
  $\ss$ such that $\ss\supsetneq \so$ and $|\ss|\leq q$,
  $$
  l_n(\hat{\Theta}(\ss))-l_n(\hat{\Theta}(\so))<
  2(1+\gamma_0)(|\ss|-|\so|)\log p.
  $$
\end{theorem}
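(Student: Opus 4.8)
The plan is to pin down the likelihood ratio between $\so$ and each larger decomposable model exactly via Porteous's decomposition, bound its right tail through a chi-square tail estimate, and then take a union bound over all such models; assumption~(\ref{eqn_Assumption2}) is exactly what makes the union bound go through.

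Fix a decomposable $\ss\supsetneq\so$ with $|\ss|\le q$, and set $k=|\ss|-|\so|\ge1$. Order the edges of $\ss\setminus\so$ as $e_1,\dots,e_k$ so that each $G_i:=\so\cup\{e_1,\dots,e_i\}$ is decomposable. Passing from $G_{i-1}$ to $G_i$ joins two vertices whose complete minimal separator $S_i$ in $G_{i-1}$ also separates them in $\so$ (since $\so\subseteq G_{i-1}$, a set separating the endpoints in $G_{i-1}$ separates them in the sparser graph $\so$), so the true partial correlation across $e_i$ given $S_i$ vanishes. Porteous's result \cite{porteous:1989} then gives
$$2\bigl[l_n(\hat\Theta(\ss))-l_n(\hat\Theta(\so))\bigr]\;\stackrel{\mathcal D}{=}\;-\sum_{i=1}^k n\log B_i,$$
where $B_1,\dots,B_k$ are independent, $B_i\sim\mathrm{Beta}\bigl(\tfrac{n-b_i}{2},\tfrac12\bigr)$, and the integers $b_i\ge1$ are no larger than the size of the clique of $\ss$ that $e_i$ creates. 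Since that clique has at most $q$ edges it spans $O(\sqrt q)$ vertices, so $b_i\le b_{\max}=o(n)$ uniformly under~(\ref{eqn_Assumption1}). Shrinking the first Beta parameter stochastically decreases $B_i$ and hence stochastically increases $-\log B_i$, so it is enough to bound the right tail of $-\sum_{i=1}^k n\log\tilde B_i$ for i.i.d.\ $\tilde B_i\sim\mathrm{Beta}\bigl(\tfrac{n-b_{\max}}{2},\tfrac12\bigr)$.

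Write $\tilde B_i=W_i/(W_i+V_i)$ with $W_i\sim\chi^2_{n-b_{\max}}$ and $V_i\sim\chi^2_1$ independent, so that $-n\log\tilde B_i\le nV_i/W_i$; on the event that all the relevant $W_i$ exceed $(n-b_{\max})(1-\delta)$ --- of overwhelming probability by the left-tail bound of Lemma~\ref{lem:1} --- the sum $-\sum_i n\log\tilde B_i$ is dominated by $(1+o(1))\chi^2_k$. Applying the right-tail bound (CSB) to $P\bigl(\chi^2_k>k(1+\lambda)\bigr)$ with $1+\lambda$ equal to (essentially) the per-edge budget $4(1+\gamma_0)\log p$, and using $\lambda-\log(1+\lambda)\ge(\sqrt{1+\lambda}-1)^2$, gives a tail of the shape $\tfrac{1}{\lambda\sqrt{\pi k}}\exp\!\bigl(-\tfrac k2(\sqrt{1+\lambda}-1)^2\bigr)$, in which $\sqrt{1+\lambda}=2\sqrt{1+\gamma_0}\,\sqrt{\log p}$; this square-root form is the source of the $\sqrt{1+\gamma_0}$ appearing in assumption~(\ref{eqn_Assumption2}).

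Finally, there are at most $\binom{\binom p2}{k}\le p^{2k}$ decomposable $\ss\supsetneq\so$ with $|\ss|-|\so|=k$, so the probability that the conclusion fails is at most $\sum_{k\ge1}p^{2k}\tfrac1{\lambda\sqrt{\pi k}}\exp\!\bigl(-\tfrac k2(\sqrt{1+\lambda}-1)^2\bigr)$, a geometric-type series in $k$. For it to sum to something small we need $\tfrac12(\sqrt{1+\lambda}-1)^2$ to exceed $2\log p$ with a margin (after accounting for the logarithmic prefactor $\tfrac1{\lambda\sqrt{\pi k}}$); taking square roots, this is exactly an inequality of the form $2(\sqrt{1+\gamma_0}-1)-[\text{logarithmic correction}]\ge\e_0$, i.e.\ assumption~(\ref{eqn_Assumption2}), and then the series is at most $\tfrac1{4\sqrt\pi\log p}\,\tfrac{p^{-\e_0}}{1-p^{-\e_0}}$. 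The main obstacle is precisely this union bound: $\mathcal E_0$ has size $p^{\Theta(q)}$, so each supermodel's likelihood ratio must have a tail light enough to survive multiplication by $p^{2k}$ and summation over $k$; that is why the penalty coefficient must be large enough, and why one must use the joint Chernoff-type tail of the sum of the $k$ Beta increments rather than bounding the increments one at a time (which would cost a factor $k$ per model instead of gaining exponentially). A subsidiary point needing care is verifying that Porteous's Beta decomposition applies with all true partial correlations equal to zero along the whole chain $\so=G_0\subset\dots\subset G_k=\ss$, which holds because every $G_i$ contains $\so$.
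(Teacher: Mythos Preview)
Your strategy---Porteous's Beta decomposition, stochastic domination by a $\chi^2_k$ variable, Cai's right-tail bound, then a union bound over $\le p^{2k}$ supermodels and a geometric sum in $k$---is exactly the paper's route. Two technical points are handled differently, and one of them matters for matching the stated bound.

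First, where you write $\tilde B_i=W_i/(W_i+V_i)$ and condition on $\{W_i\ge(n-b_{\max})(1-\delta)\}$ to get $-n\log\tilde B_i\le (1+o(1))V_i$, the paper instead invokes directly from \cite{porteous:1989} the unconditional stochastic inequality $-\log B_i\preceq \frac{1}{n-c_i-1}\chi^2_1$, which immediately gives $l_n(\hat\Theta(\ss))-l_n(\hat\Theta(\so))\preceq \frac{n}{2(n-\sqrt{2q}-1)}\chi^2_m$ with no conditioning event to carry through the union bound. Your route works too, but the extra ``denominator large'' events have to be summed over all $\sum_k kp^{2k}$ instances; this is absorbable under~(\ref{eqn_Assumption1}) but costs you the clean probability statement.

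Second---and this is where your sketch does not reproduce the theorem as written---your explanation of how $\sqrt{1+\gamma_0}$ enters assumption~(\ref{eqn_Assumption2}) is off. You attribute it to the elementary inequality $\lambda-\log(1+\lambda)\ge(\sqrt{1+\lambda}-1)^2$ applied with $1+\lambda=4(1+\gamma_0)\log p$. In the paper the square root appears earlier: one first shows from~(\ref{eqn_Assumption1}) and~(\ref{eqn_Assumption2}) that $\frac{n-\sqrt{2q}-1}{n}\ge(1+\gamma_0)^{-1/2}$, so the $\chi^2_m$ threshold drops from $4(1+\gamma_0)m\log p$ to $4\sqrt{1+\gamma_0}\,m\log p$; Cai's bound is then applied with $1+\lambda=4\sqrt{1+\gamma_0}\log p$ and the exponent $\lambda-\log(1+\lambda)$ is kept exactly (no further lower bound), which after subtracting $2m\log p$ from the union bound yields precisely the left side of~(\ref{eqn_Assumption2}). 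Your version would produce a different correction term (of order $1/\sqrt{\log p}$ rather than $(\log\log p)/\log p$), hence a different sufficient condition and a different probability bound than the one stated.
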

\begin{proof}
  First, fix a single such model $\ss$, and define $m=|\ss|-|\so|$. By
  \cite{porteous:1989,eriksen:1996}, $  l_n(\hat{\Theta}(\ss))-l_n(\hat{\Theta}(\so))$ is distributed as $
  -\frac{n}{2}\log\left(\prod_{i=1}^m
    B_i\right)$,
  where $B_i\sim \mathit{Beta}(\frac{n-c_i}{2},\frac{1}{2})$ are independent
  random variables and the constants $c_1,\dots,c_m$ are bounded by
  $1$ less than the maximal clique size of the graph given by model
  $\ss$, implying $c_i\leq\sqrt{2q}$ for each $i$.  Also shown in \cite{porteous:1989} is the stochastic
  inequality $-\log(B_i)\leq \frac{1}{n-c_i-1}\chi^2_1$. It
  follows that,  stochastically,
  $$
  l_n(\hat{\Theta}(\ss))-l_n(\hat{\Theta}(\so))\leq
  \frac{n}{2}\times\frac{1}{n-\sqrt{2q}-1}\chi^2_m.
  $$
  Finally, combining the assumptions on $n,p,q$ and the (CSB)
  inequalities, we obtain:
  $$
  P\{l_n(\hat{\Theta}(\ss))-l_n(\hat{\Theta}(\so))\geq
  2(1+\gamma_0)m\log(p)\}\leq \frac{1}{4\sqrt{\pi}\log
    p}e^{-\frac{m}{2}(4(1+\frac{\e_0}{2})\log p)}.
  $$
  
  Next, note that the number of models $|\ss|$ with $\ss\supset\so$
  and $|\ss|-|\so|=m$ is bounded by $p^{2m}$. Taking the union bound
  over all choices of $m$ and all choices of $\ss$ with that given
  $m$, we obtain that the desired result holds with the desired probability.
\end{proof}

We are now ready to give a non-asymptotic version of the Main Theorem.
For its proof apply the union bound to the statements in
Theorems~\ref{thm:1-nonasy} and \ref{thm:2-nonasy}, as in the
asymptotic proof given in section~\ref{section_Main}.

\begin{theorem}
  \label{mthm:nonasy}
  Suppose assumptions (\ref{eqn_Assumption1}) and
  (\ref{eqn_Assumption2}) hold. Let $\mathcal{E}$ be the set of
  subsets $\ss$ of edges between the $p$ nodes, satisfying $|\ss|\leq
  q$ and representing a decomposable model. Then it holds with
  probability at least $1-\frac{1}{4\sqrt{\pi}\log
    p}\frac{p^{-\e_0}}{1-p^{-\e_0}}-\frac{1}{\sqrt{\pi\log
      p}}p^{-\e_1}$ that
  $$
  \so=\arg\min_{\ss\in\mathcal{E}}\mathrm{BIC}_{\gamma}(\ss).
  $$
  That is, the extended BIC with parameter $\gamma$ selects the
  smallest true model.
\end{theorem}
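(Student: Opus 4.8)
The plan is to follow the recipe flagged just before the statement: obtain the two "good events" from Theorems~\ref{thm:1-nonasy} and~\ref{thm:2-nonasy}, combine them with a union bound, and then reproduce verbatim the deterministic comparison argument used in the proof of the Main Theorem in Section~\ref{section_Main}. Concretely, under assumption~(\ref{eqn_Assumption1}) let $\Omega_1$ be the event of Theorem~\ref{thm:1-nonasy}, on which $l_n(\Theta_0)-l_n(\hat\Theta(\ss))>2q(\log p)(1+\gamma_0)$ for all $\ss\not\supset\so$ with $|\ss|\le q$; under assumptions~(\ref{eqn_Assumption1}) and~(\ref{eqn_Assumption2}) let $\Omega_0$ be the event of Theorem~\ref{thm:2-nonasy}, on which $l_n(\hat\Theta(\ss))-l_n(\hat\Theta(\so))<2(1+\gamma_0)(|\ss|-|\so|)\log p$ for all decomposable $\ss\supsetneq\so$ with $|\ss|\le q$. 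Since $P(\Omega_1)\ge 1-\frac{1}{\sqrt{\pi\log p}}p^{-\e_1}$ and $P(\Omega_0)\ge 1-\frac{1}{4\sqrt\pi\log p}\frac{p^{-\e_0}}{1-p^{-\e_0}}$, the union bound gives $P(\Omega_0\cap\Omega_1)$ at least the probability claimed in the statement.

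It then remains to show, deterministically, that on $\Omega_0\cap\Omega_1$ we have $\mathrm{BIC}_\gamma(\ss)>\mathrm{BIC}_\gamma(\so)$ for every $\ss\in\mathcal{E}\setminus\{\so\}$. First I would record $\mathcal{E}\subseteq\mathcal{E}_0\cup\mathcal{E}_1$ (a decomposable model with $|\ss|\le q$ either contains $\so$ or it does not), and the penalty identity $|\ss|\log n+4|\ss|\gamma\log p-|\so|\log n-4|\so|\gamma\log p=4(1+\gamma_0)(|\ss|-|\so|)\log p$, which follows from $\kappa=\log_n p$, i.e. $\log n=\frac1\kappa\log p$, together with $4(1+\gamma_0)=4\gamma+\frac1\kappa$. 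For $\ss\in\mathcal{E}_0\setminus\{\so\}$, Theorem~\ref{thm:2-nonasy} gives $-2(l_n(\hat\Theta(\ss))-l_n(\hat\Theta(\so)))>-4(1+\gamma_0)(|\ss|-|\so|)\log p$, and adding the penalty identity with $|\ss|-|\so|\ge 1$ yields $\mathrm{BIC}_\gamma(\ss)-\mathrm{BIC}_\gamma(\so)>0$.

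The case $\ss\in\mathcal{E}_1$ is the only spot that needs a little care, since Theorem~\ref{thm:1-nonasy} controls $l_n(\Theta_0)-l_n(\hat\Theta(\ss))$ rather than $l_n(\hat\Theta(\so))-l_n(\hat\Theta(\ss))$. Here I would insert the trivial inequality $l_n(\hat\Theta(\so))\ge l_n(\Theta_0)$ (because $\Theta_0$ lies in the model $\so$), which gives $-2(l_n(\hat\Theta(\ss))-l_n(\hat\Theta(\so)))\ge 2(l_n(\Theta_0)-l_n(\hat\Theta(\ss)))>4q(1+\gamma_0)\log p$ on $\Omega_1$, using $|\so|\le q$ so that $\so$ itself is an admissible competitor. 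Since $|\ss|-|\so|\ge -|\so|\ge -q$, the penalty identity contributes at least $-4q(1+\gamma_0)\log p$, so $\mathrm{BIC}_\gamma(\ss)-\mathrm{BIC}_\gamma(\so)>4q(1+\gamma_0)\log p-4q(1+\gamma_0)\log p=0$; the strict inequality in Theorem~\ref{thm:1-nonasy} is exactly what keeps this strict even in the worst case $|\ss|<|\so|$. Combining the two cases, $\so$ strictly beats every other $\ss\in\mathcal{E}$ on $\Omega_0\cap\Omega_1$, so $\so=\arg\min_{\ss\in\mathcal{E}}\mathrm{BIC}_\gamma(\ss)$ there.

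I do not expect a real obstacle: the statement is essentially a packaging of the two preceding non-asymptotic theorems, and the argument is the finite-sample mirror of the Main Theorem proof. The main (minor) things to get right are the bookkeeping of the two failure probabilities in the union bound and the $\mathcal{E}_1$ case described above, where one must route the comparison through $l_n(\Theta_0)$ and check that the uniform slack $2q(1+\gamma_0)\log p$ in Theorem~\ref{thm:1-nonasy} dominates the largest possible negative penalty gap $-4q(1+\gamma_0)\log p$; everything else is identical to the asymptotic proof and to the proofs of Theorems~\ref{thm:1-nonasy} and~\ref{thm:2-nonasy}.
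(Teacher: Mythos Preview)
Your proposal is correct and follows exactly the route the paper indicates: take the union bound over the events of Theorems~\ref{thm:1-nonasy} and~\ref{thm:2-nonasy} and then rerun the deterministic BIC comparison from the Main Theorem proof. You have in fact been more explicit than the paper (which only says ``apply the union bound \ldots\ as in the asymptotic proof''), correctly spelling out the penalty identity via $\kappa=\log_n p$ and the routing through $l_n(\hat\Theta(\so))\ge l_n(\Theta_0)$ in the $\mathcal{E}_1$ case.
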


Finally, we note that translating the above to the asymptotic version
of the result is simple.  If the conditions (\ref{eqn_Asymp}) hold,
then for sufficiently large $n$ (and thus sufficiently large
$p$), assumptions (\ref{eqn_Assumption1}) and (\ref{eqn_Assumption2})
hold. Furthermore, although we may not have the exact equality
$\kappa=\log_n p$, we will have $\log_n p\rightarrow \kappa$; this
limit will be sufficient for the necessary inequalities to hold for
sufficiently large $n$. The proofs then follow from the non-asymptotic
results.

\bibliographystyle{plainnat} 
\bibliography{graphicalEBIC_arxiv}

\clearpage
\begin{appendix}

\section{Appendix}
This section gives the proof for Lemma~\ref{lem:1}, and fills in the details of the proofs of Theorems~\ref{thm:1-nonasy} and~\ref{thm:2-nonasy}.

{\bf Lemma~\ref{lem:1}}{\it
For any $\lambda\in(0,1)$, for any $n\geq 4\lambda^{-2}+1$,
$$P\{\chi^2_n<n(1-\lambda)\}\leq \frac{1}{\lambda\sqrt{\pi (n-1)}}e^{-\frac{n-1}{2}(\lambda-\log(1+\lambda))}\;\;.$$
}

\begin{remark}
We note that some lower bound on $n$ is intuitively necessary in order to be able to bound the `left tail', because the mode of the $\chi^2_n$ distribution is at $x=n-2$ (for $n\geq 2$). If $\lambda$ is very close to zero, then the `left tail' ($\chi^2_n\in[0,n(1-\lambda)]$) actually includes the mode $x=n-2\leq n(1-\lambda)$; therefore, we could not hope to get an exponentially small probability for being in the tail. However, this intuitive explanation suggests that we should have $n\geq \mathbf{O}(\lambda^{-1})$; perhaps the bound in this lemma could be tightened.\end{remark}

We first prove a preliminary lemma:

\begin{alemma}\label{prelim_lemma}For any $\lambda>0$, for any $n\geq 4\lambda^{-2}+1$,
$$P\{\chi^2_{n+1}<(n+1)(1-\lambda)\}\leq P\{\chi^2_n\leq n(1-\lambda)\}\;\;.$$
\end{alemma}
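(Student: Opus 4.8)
The plan is to reduce the claim to a pointwise comparison of the two chi-square densities after a change of variables. First I would dispose of the case $\lambda\ge1$, in which $(n+1)(1-\lambda)\le0$ and the left-hand probability is $0$; so assume $\lambda\in(0,1)$ (which together with $n\ge4\lambda^{-2}+1$ forces $n\ge6$). Write $a=1-\lambda$ and let $f_k$ denote the $\chi^2_k$ density. In $P\{\chi^2_{n+1}<(n+1)a\}=\int_0^{(n+1)a}f_{n+1}(x)\,dx$ I substitute $x=\frac{n+1}{n}y$, so that the upper limit becomes $na$, matching $P\{\chi^2_n\le na\}=\int_0^{na}f_n(y)\,dy$; after the substitution the first integrand equals $\left(\frac{n+1}{n}\right)^{(n+1)/2}\frac{y^{(n-1)/2}e^{-y/2}e^{-y/(2n)}}{2^{(n+1)/2}\Gamma((n+1)/2)}$. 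It therefore suffices to prove the pointwise inequality
\[
h(y):=\left(\tfrac{n+1}{n}\right)^{(n+1)/2}\frac{\Gamma(n/2)}{\sqrt2\,\Gamma\!\big((n+1)/2\big)}\,\sqrt{y}\;e^{-y/(2n)}\;\le\;1,\qquad y\in(0,na).
\]

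Next I would observe that $\sqrt{y}\,e^{-y/(2n)}$ is increasing on $(0,n)$ (its derivative has the sign of $1-y/n$), and since $na<n$ the function $h$ attains its supremum on $(0,na)$ at the endpoint $y=na$; hence it is enough to verify $h(na)\le1$. Squaring $h(na)$ and using the log-convexity bound $\Gamma(n/2)/\Gamma((n+1)/2)\le\sqrt{2/(n-1)}$ (which follows from $\Gamma(n/2)^2\le\Gamma((n-1)/2)\,\Gamma((n+1)/2)$), this reduces to the purely numerical inequality
\[
\left(\tfrac{n+1}{n}\right)^{n+1}\frac{n}{n-1}\,(1-\lambda)\,e^{\lambda}\;\le\;e .
\]
To close it I would combine $(1-\lambda)e^{\lambda}\le e^{-\lambda^2/2}$ with the hypothesis $\lambda^2\ge4/(n-1)$ (i.e.\ $n\ge4\lambda^{-2}+1$), giving $(1-\lambda)e^{\lambda}\le e^{-2/(n-1)}$, together with the elementary bounds $\frac{n}{n-1}\le e^{1/(n-1)}$ and $(1+\tfrac1n)^{n+1}\le e^{1+1/(2n)}$ (from $\log(1+\tfrac1n)\le\tfrac1n-\tfrac1{2n^2}+\tfrac1{3n^3}$). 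Multiplying, the left side is at most $e^{\,1+\frac1{2n}-\frac1{n-1}}\le e$, since $\tfrac1{2n}\le\tfrac1{n-1}$.

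The one genuinely delicate point is the constant bookkeeping at this last step. A loose treatment — not exploiting that $n\ge4\lambda^{-2}+1$ keeps $\lambda$ bounded away from $0$, or bounding the $\Gamma$-ratio and $(1+1/n)^{n+1}$ crudely — only yields $h(na)$ bounded by roughly $2\sqrt2\,e^{-1/2}>1$, so the hypothesis on $n$ is genuinely needed and the estimates must be kept reasonably sharp. Everything else is routine, and in fact the final inequality retains some slack, consistent with the paper's remark that the constant $4$ is probably not optimal.
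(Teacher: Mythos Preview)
Your argument is correct and is essentially the same as the paper's proof. Both rescale so that the two probabilities are integrals over the same interval and then compare densities pointwise; your substitution $x=\tfrac{n+1}{n}y$ is exactly the paper's passage to the densities $\tilde f_k$ of $\chi^2_k/k$, and the ratio you call $h(y)$ coincides with the bracketed factor in the paper's expression for $\tilde f_{n+1}/\tilde f_n$. Both proofs then use monotonicity of $y\mapsto ye^{-y}$ (equivalently $\sqrt{y}\,e^{-y/(2n)}$) to reduce to the endpoint, and both invoke the log-convexity bound $\Gamma(n/2)/\Gamma((n+1)/2)\le\sqrt{2/(n-1)}$.

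The only cosmetic difference is the closing bookkeeping: the paper bounds $(1+1/n)^n\le e$ and $(1-\lambda)e^{-(1-\lambda)}\le e^{-1}(1-\lambda^2/2)$ to reach the clean algebraic inequality $\tfrac{n+1}{n-1}(1-\lambda^2/2)\le1$, which is immediate from $\lambda^2\ge 4/(n-1)$; you instead stack the exponential bounds $(1+1/n)^{n+1}\le e^{1+1/(2n)}$, $\tfrac{n}{n-1}\le e^{1/(n-1)}$, and $(1-\lambda)e^\lambda\le e^{-\lambda^2/2}\le e^{-2/(n-1)}$. Your route works (and your estimate $(1+1/n)^{n+1}\le e^{1+1/(2n)}$ is fine for $n\ge2$), but the paper's algebraic form is a little tidier and makes the role of the constant $4$ more transparent.
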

\begin{proof} Let $f_n$ denote the density function for $\chi^2_n$, and let $\tilde{f}_n$ denote the density function for $\frac{1}{n}\chi^2_n$. Then, using $y=x/n$, we get:
$$f_n(x)=\frac{1}{2^{n/2}\Gamma(n/2)}x^{n/2-1}e^{-x/2} \ \Rightarrow \ \tilde{f}_n(y)=\frac{1}{2^{n/2}\Gamma(n/2)}y^{n/2-1}e^{-ny/2}n^{n/2}\;\;.$$
So,
$$\tilde{f}_{n+1}(y)=\tilde{f}_n(y)\times \left[\sqrt{\frac{n+1}{2}}\frac{\Gamma(n/2)}{\Gamma((n+1)/2)}\left(\frac{n+1}{n}\right)^{n/2}\sqrt{ye^{-y}}\right]\;\;.$$
First, note that $ye^{-y}$ is an increasing function for $y<1$, and therefore
$$y\in[0,1-\lambda] \ \Rightarrow \ ye^{-y}\leq (1-\lambda)e^{-(1-\lambda)}\leq e^{-1}\left(1-\frac{\lambda^2}{2}\right)\;\;.$$
(Here the last inequality is from the Taylor series). Next, since $\log\Gamma(x)$ is a convex function (where $x>0$), and since $\Gamma((n+1)/2)=\Gamma((n-1)/2)\times\frac{n-1}{2}$, we see that
$$ \frac{\Gamma((n+1)/2)}{\Gamma(n/2)}\geq\sqrt{\frac{n-1}{2}}\;\;.$$
Finally, it is a fact that $(1+\frac{1}{n})^n\leq e$. Putting the above bounds together, and assuming that $y\in[0,1-\lambda]$, we obtain
$$\tilde{f}_{n+1}(y)\leq \tilde{f}_n(y)\times \left[\sqrt{\frac{n+1}{2}}\sqrt{\frac{2}{n-1}}\sqrt{e}\sqrt{e^{-1}\left(1-\frac{\lambda^2}{2}\right)}\right]$$
$$=\tilde{f}_n(y)\times \left[\sqrt{\frac{n+1}{n-1}}\sqrt{1-\frac{\lambda^2}{2}}\right]\;\;.$$
Since we require $n\geq 4\lambda^{-2}+1$, the quantity in the brackets is at most $1$, and so
$$\tilde{f}_{n+1}(y)\leq \tilde{f}_n(y) \ \forall \ y\in[0,1-\lambda]\;\;.$$
Therefore,
$$P\left\{\frac{1}{n+1}\chi^2_{n+1}<(1-\lambda)\right\}\leq P\left\{\frac{1}{n}\chi^2_n<(1-\lambda)\right\}\;\;.$$
\end{proof}

Now we prove Lemma~\ref{lem:1}.
\begin{proof}
First suppose that $n$ is even. Let $f_n$ denote the density function of the $\chi^2_n$ distribution. From~\cite{cai:2002}, if $n>2$,
$$P\{\chi^2_n<x\}=1-2f_n(x)-P\{\chi^2_{n-2}>x\}=-2f_n(x)+P\{\chi^2_{n-2}<x\}\;\;.$$
Iterating this identity, we get
\begin{eqnarray*}P\{\chi^2_n<x\}&=&P\{\chi^2_2<x\}-2f_n(x)-2f_{n-2}(x)-\dots-2f_4(x)\\
&=&1-e^{-\frac{x}{2}}-2\sum_{k=1}^{n/2-1}f_{2k+2}(x)\\
&=&1-e^{-\frac{x}{2}}-2\sum_{k=1}^{n/2-1}\frac{1}{2^{k+1}\Gamma(k+1)}x^ke^{-\frac{x}{2}}\\
&=&1-e^{-\frac{x}{2}}\left(\sum_{k=0}^{n/2-1}\frac{1}{2^kk!}x^k\right)\\
&=&1-e^{-\frac{x}{2}}\left(\sum_{k=0}^{\infty}\frac{(x/2)^k}{k!}-\sum_{k=n/2}^{\infty}\frac{(x/2)^k}{k!}\right)\\
&=&1-e^{-\frac{x}{2}}\left(e^{\frac{x}{2}}-\sum_{k=n/2}^{\infty}\frac{(x/2)^k}{k!}\right)\\
&=&e^{-\frac{x}{2}}\sum_{k=n/2}^{\infty}\frac{(x/2)^k}{k!}\;\;.\\
\end{eqnarray*}
Now set $x=n(1-\lambda)$ for $\lambda\in(0,1)$. We obtain
\begin{eqnarray*}P\{\chi^2_n<x\}&=&e^{-\frac{n(1-\lambda)}{2}}\sum_{k=n/2}^{\infty}\frac{(n(1-\lambda)/2)^k}{k!}\\
&\leq& e^{-\frac{n(1-\lambda)}{2}}\frac{(n/2)^{n/2}}{(n/2)!}\sum_{k=n/2}^{\infty}(1-\lambda)^k\\
&=& e^{-\frac{n(1-\lambda)}{2}}\frac{(n/2)^{n/2}}{(n/2)!}\frac{(1-\lambda)^{n/2}}{\lambda}\;\;.\\
\end{eqnarray*}
By Stirling's formula,
$$\frac{(n/2)^{n/2}}{(n/2)!}\leq \frac{e^{\frac{n}{2}}}{\sqrt{\pi n}}\;\;,$$
and so,
$$P\{\chi^2_n<n(1-\lambda)\}\leq e^{-\frac{n(1-\lambda)}{2}}\frac{e^{\frac{n}{2}}}{\sqrt{\pi n}}\frac{(1-\lambda)^{n/2}}{\lambda}=\frac{1}{\lambda \sqrt{\pi n}}e^{\frac{n}{2}(\lambda+\log (1-\lambda))}\;\;.$$

This is clearly sufficient to prove the desired bound in the case that $n$ is even. Next we turn to the odd case; let $n$ be odd. First observe that if $\lambda>1$, the statement is trivial, while if $\lambda\leq 1$, then $n\geq 4\lambda^{-2}+1\geq 5$, therefore $n-1$ is positive. By Lemma~\ref{prelim_lemma} and the expression above,
$$P\{\chi^2_n<n(1-\lambda)\}\leq P\{\chi^2_{n-1}\leq (n-1)(1-\lambda)\}\leq \frac{1}{\lambda \sqrt{\pi (n-1)}}e^{\frac{n-1}{2}(\lambda+\log (1-\lambda))}\;\;.$$

\end{proof}

Next we turn to the theorems. Recall assumptions~\ref{eqn_Assumption1} and~\ref{eqn_Assumption2}. Lemmas~\ref{scorebound} and~\ref{likelihoodbound} below are sufficient to fill in the details of Theorem~\ref{thm:1-nonasy}.

\begin{alemma}\label{scorebound} With probability at least $1-\frac{1}{\sqrt{\pi \log p}}e^{-\epsilon_1\log p}$, the following holds for all edges $\mathbf{e}$ in the complete graph:
$$(s_n(\Theta_0))_{\mathbf{e}}^2\leq 6\sigma_{max}^4(2+\epsilon_1)n\log p\;\;.$$
\end{alemma}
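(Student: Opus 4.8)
The plan is to bound a single entry of the score vector and then take a union bound over all $\binom{p}{2}$ edges. Recall from the proof of Theorem~\ref{thm:1-nonasy} that $s_n(\Theta_0)=\frac{n}{2}(\Theta_0^{-1}-S)$, so for an edge $\ee=\{j,k\}$ we have $(s_n(\Theta_0))_{\ee}=\frac{n}{2}\big((\Theta_0^{-1})_{jk}-S_{jk}\big)=\frac{1}{2}\sum_{i=1}^n\big((\Theta_0^{-1})_{jk}-(X_i)_j(X_i)_k\big)$. Writing $\Sigma_0=\Theta_0^{-1}$, the pair $((X_i)_j,(X_i)_k)$ is bivariate normal with mean zero, variances $\Sigma_{0,jj},\Sigma_{0,kk}\le\s^2_{\max}$ and covariance $\Sigma_{0,jk}$. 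After rescaling each coordinate to unit variance, Lemma~\ref{lem:2} applies: with $\rho=\rho_{jk}$ the correlation, $\sum_i\big((X_i)_j(X_i)_k-\Sigma_{0,jk}\big)$ equals in distribution $\sqrt{\Sigma_{0,jj}\Sigma_{0,kk}}\big[\frac{1+\rho}{2}(A-n)-\frac{1-\rho}{2}(B-n)\big]$ for independent $A,B\sim\chi^2_n$.

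Next I would control each chi-square fluctuation with (CSB). Choosing $\lambda=\lambda_n=\sqrt{(2+\e_1)(2\log p)/n}$ (so that $\lambda\to 0$ under assumption~(\ref{eqn_Assumption1}), and $n\ge 4\lambda^{-2}+1$ eventually, as required for the left-tail bound of Lemma~\ref{lem:1}), both the right-tail bound $P\{\chi^2_n>n(1+\lambda)\}$ and the left-tail bound $P\{\chi^2_n<n(1-\lambda)\}$ are at most, up to the polynomial prefactor, $e^{-\frac{n}{2}(\lambda-\log(1+\lambda))}\le e^{-\frac{n}{2}\cdot\frac{\lambda^2}{2}(1-o(1))}$, which is of order $p^{-(2+\e_1)}$ times a $1/(\lambda\sqrt{n})=O(1/\sqrt{\log p})$ factor. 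On the event that $|A-n|\le n\lambda$ and $|B-n|\le n\lambda$ for both $A,B$, using $|\rho|\le 1$ and $\Sigma_{0,jj}\Sigma_{0,kk}\le\s^4_{\max}$ we get $\big|\sum_i((X_i)_j(X_i)_k-\Sigma_{0,jk})\big|\le \s^2_{\max}\cdot n\lambda$, hence $(s_n(\Theta_0))_{\ee}^2\le\frac14\s^4_{\max}n^2\lambda^2=\frac14\s^4_{\max}n^2\cdot\frac{2(2+\e_1)\log p}{n}=\frac12\s^4_{\max}(2+\e_1)n\log p$, comfortably below the target $6\s^4_{\max}(2+\e_1)n\log p$ (the slack absorbs the $(1-o(1))$ and the constant factors and gives room for the cruder bookkeeping one does in the full argument).

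Finally I would take the union bound. There are fewer than $p^2/2$ edges, and for each edge the bad event (either of $A,B$ outside its band) has probability at most a constant times $\frac{1}{\lambda\sqrt{n}}p^{-(2+\e_1)}$; summing over edges and substituting $\lambda\sqrt{n}=\sqrt{2(2+\e_1)\log p}$ gives a total failure probability of order $\frac{1}{\sqrt{\log p}}p^{-\e_1}$, matching (after tracking the numerical constants, which is where the generous factor of $6$ in the statement is spent) the claimed bound $\frac{1}{\sqrt{\pi\log p}}e^{-\e_1\log p}$. The main obstacle is purely bookkeeping: keeping the polynomial prefactors from Lemma~\ref{lem:1} and the Cai bound, the rescaling constants, and the union-bound factor $p^2$ all consistent so that the final exponent is exactly $\e_1$ and the prefactor is exactly $1/\sqrt{\pi\log p}$; the probabilistic content is entirely in Lemmas~\ref{lem:1} and~\ref{lem:2} plus Cai's bound, all already available.
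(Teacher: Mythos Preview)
Your plan is essentially the paper's proof: express the score entry via Lemma~\ref{lem:2}, control each $\chi^2_n$ tail with (CSB), then take the union bound over the $\binom{p}{2}\le p^2/2$ edges. The only issue is an arithmetic slip in the choice of $\lambda$. With your $\lambda^2=2(2+\e_1)(\log p)/n$, the exponent is $\tfrac{n}{2}\cdot\tfrac{\lambda^2}{2}=\tfrac{2+\e_1}{2}\log p$, so each tail is only of order $p^{-(2+\e_1)/2}$, not $p^{-(2+\e_1)}$ as you wrote; after multiplying by roughly $p^2$ edges the bound does not go to zero. The paper instead sets $\lambda=\sqrt{6(2+\e_1)(\log p)/n}$ and uses the third-order Taylor bound $\lambda-\log(1+\lambda)\ge\tfrac{\lambda^2}{2}-\tfrac{\lambda^3}{3}\ge\tfrac{\lambda^2}{3}$ (valid since assumption~(\ref{eqn_Assumption1}) forces $\lambda\le\tfrac12$), so that $\tfrac{n}{2}\cdot\tfrac{\lambda^2}{3}=(2+\e_1)\log p$ exactly. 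In other words, the factor $6$ in the statement is not slack in the score bound to be spent on bookkeeping elsewhere, as you suggest; it is precisely what is needed to make $\lambda$ large enough for the per-edge probability to beat the $p^2$ coming from the union bound. Once you take $\lambda$ in this range your argument and the paper's coincide.
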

\begin{proof}Fix some edge $\mathbf{e}=\{j,k\}$. Then
$$(s_n(\Theta_0))_{(j,k)}=\frac{n}{2}(\Sigma_0)_{jk}-\frac{1}{2}X_j^TX_k=-\frac{1}{2}\sum_{i=1}^n((X_j)_i(X_k)_i-(\Sigma_0)_{jk})\;\;.$$
Write $Y_j=((\Sigma_0)_{jj})^{-1}X_j$, $Y_k=((\Sigma_0)_{kk})^{-1}X_k$, $\rho=((\Sigma_0)_{jj}(\Sigma_0)_{kk})^{-1}(\Sigma_0)_{jk}=\mathrm{corr}(Y_j,Y_k)$. Then
$$(s_n(\Theta_0))_{(j,k)}=-\frac{1}{2}(\Sigma_0)_{jj}(\Sigma_0)_{kk}\sum_{i=1}^n((Y_j)_i(Y_k)_i-\rho)\;\;.$$
By Lemma~\ref{lem:2}, there are some independent $A,B\sim\chi^2_n$ such that
$$(s_n(\Theta_0))_{(j,k)}=-\frac{1}{2}(\Sigma_0)_{jj}(\Sigma_0)_{kk}\left[\left(\frac{1+\rho}{2}\right)(A-n)-\left(\frac{1-\rho}{2}\right)(B-n)\right]\;\;.$$
There are ${p\choose 2}\leq \frac{1}{2}p^2$ edges in the complete graph. Therefore, by the union bound, it will suffice to show that, with probability at least $1-(\frac{1}{2}p^2)^{-1}\frac{1}{\sqrt{\pi \log p}}e^{-\epsilon_1\log p}$,
$$\frac{1}{4}\sigma_{max}^4\left[\left(\frac{1+\rho}{2}\right)(A-n)-\left(\frac{1-\rho}{2}\right)(B-n)\right]^2\leq 6\sigma_{max}^4(2+\epsilon_1)n\log p\;\;.$$
Suppose this bound does not hold. Then
$$\left|\left(\frac{1+\rho}{2}\right)(A-n)\right|>\sqrt{6(2+\epsilon_1)n\log p} \ \ \text{or} \ \ \left|\left(\frac{1-\rho}{2}\right)(B-n)\right|>\sqrt{6(2+\epsilon_1)n\log p}\;\;.$$
Since $\rho\in[-1,1]$, this implies that
$$\left|A-n\right|>\sqrt{6(2+\epsilon_1)n\log p} \ \ \text{or} \ \ \left|B-n\right|>\sqrt{6(2+\epsilon_1)n\log p}\;\;.$$
Since $A\stackrel{\mathcal{D}}{=}B$, it will suffice to show that with probability at least $1-p^{-2}\frac{1}{\sqrt{\pi \log p}}e^{-\epsilon_1\log p}$,
$$\left|A-n\right|\leq \sqrt{6(2+\epsilon_1)n\log p}\;\;.$$
Write $\lambda=\sqrt{6(2+\epsilon_1)\frac{\log p}{n}}$. Observe that, by assumption (\ref{eqn_Assumption1}), $\lambda\leq \frac{1}{2}$ and $n\geq 3$; therefore (by Taylor series),
$$\frac{n}{2}(\lambda-\log(1+\lambda))\geq\frac{n}{2}\left( \frac{\lambda^2}{2}-\frac{\lambda^3}{3}\right)\geq\frac{n}{2}\cdot\frac{\lambda^2}{3}=(2+\epsilon_1)\log p\;\;, \ \text{and}$$
$$-\frac{n-1}{2}(\lambda+\log(1-\lambda))\geq\frac{n-1}{2}\left( \frac{\lambda^2}{2}\right)\geq\frac{n}{2}\cdot\frac{\lambda^2}{3}=(2+\epsilon_1)\log p\;\;.$$
Furthermore,
$$\lambda\sqrt{n-1}=\sqrt{6(2+\epsilon_1)\log p\times\frac{n-1}{n}}\geq\sqrt{\log p}\;\;.$$
By (CSB) from the paper,
$$P\{A-n>\sqrt{6(2+\epsilon_1)n\log p}\}=P\{A>n(1+\lambda)\}\leq\frac{1}{\lambda\sqrt{\pi n}}e^{-\frac{n}{2}(\lambda-\log(1+\lambda))}$$
$$\leq \frac{1}{\lambda\sqrt{\pi(n-1)}}e^{-(2+\epsilon_1)\log p}\leq \frac{1}{\sqrt{\pi\log p}}e^{-(2+\epsilon_1)\log p}\;\;,$$
and also,
$$P\{A-n<-\sqrt{6(2+\epsilon_1)n\log p}\}=P\{A<n(1-\lambda)\}\leq\frac{1}{\lambda\sqrt{\pi (n-1)}}e^{\frac{n-1}{2}(\lambda+\log(1-\lambda))}$$
$$\leq \frac{1}{\lambda\sqrt{\pi(n-1)}}e^{-(2+\epsilon_1)\log p}\leq \frac{1}{\sqrt{\pi\log p}}e^{-(2+\epsilon_1)\log p}\;\;.$$
This gives the desired result.\end{proof}

\begin{alemma}\label{likelihoodbound} Recall that, in the proof of Theorem~\ref{thm:1-nonasy}, we showed that
$$l_n(\Theta)-l_n(\Theta_0)\leq\sqrt{\theta_0^2(p+2q)\times 6\s^4_{max}(2+\e_1)n\log p}-\frac{1}{2}\theta_0^2\times \frac{n}{2}(2\lambda_{max})^{-2}\;\;.$$
Then this implies that
$$l_n(\Theta)-l_n(\Theta_0)\leq-2q(\log p)(1+\gamma_0)\;\;.$$
\end{alemma}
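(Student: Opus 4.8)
The final statement is pure arithmetic, so the plan is to name the two terms on the right-hand side of the recalled inequality and bound them against one another, using the two branches of the $\max$ in assumption~(\ref{eqn_Assumption1}) together with the relation $C\geq\s^2_{\max}\lambda_{\max}$. Set
$$T_1=\sqrt{\theta_0^2(p+2q)\cdot 6\s^4_{\max}(2+\e_1)n\log p},\qquad T_2=\tfrac12\theta_0^2\cdot\tfrac n2(2\lambda_{\max})^{-2}=\frac{\theta_0^2 n}{16\lambda^2_{\max}},$$
so that the recalled bound reads $l_n(\Theta)-l_n(\Theta_0)\leq T_1-T_2$, and it suffices to prove $T_2-T_1\geq 2q(\log p)(1+\gamma_0)$.

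The first step is to bound $T_1$. From $C\geq\s^2_{\max}\lambda_{\max}$ we get $\s^4_{\max}\leq C^2/\lambda^2_{\max}$, and writing $2+\e_1=2(1+\tfrac{\e_1}{2})$ this gives $T_1^2\leq 12C^2(1+\tfrac{\e_1}{2})\,\theta_0^2 n(p+2q)\log p\,/\,\lambda^2_{\max}$. I would then invoke assumption~(\ref{eqn_Assumption1}) via the bound $\max\{1+\gamma_0,(1+\tfrac{\e_1}{2})C^2\}\geq(1+\tfrac{\e_1}{2})C^2$, i.e. $(p+2q)\log p\leq n\theta_0^2/\big(3200\,\lambda^2_{\max}(1+\tfrac{\e_1}{2})C^2\big)$; substituting cancels the factors $(1+\tfrac{\e_1}{2})$ and $C^2$ and yields $T_1^2\leq \tfrac{12}{3200}\big(\theta_0^2 n/\lambda^2_{\max}\big)^2=\tfrac{3}{800}\big(\theta_0^2 n/\lambda^2_{\max}\big)^2$, so $T_1\leq\sqrt{3/800}\,\cdot\,\theta_0^2 n/\lambda^2_{\max}$.

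The second step compares this with $T_2=\tfrac1{16}\,\theta_0^2 n/\lambda^2_{\max}$. Since $\sqrt{3/800}\leq 99/1600$ (square both sides: $3/800=9600/2560000\leq 9801/2560000$), we obtain $T_2-T_1\geq\big(\tfrac1{16}-\tfrac{99}{1600}\big)\theta_0^2 n/\lambda^2_{\max}=\tfrac1{1600}\,\theta_0^2 n/\lambda^2_{\max}$. Finally, using $q\leq p+2q$ and assumption~(\ref{eqn_Assumption1}) once more, now via $\max\{1+\gamma_0,(1+\tfrac{\e_1}{2})C^2\}\geq 1+\gamma_0$, gives $2q(\log p)(1+\gamma_0)\leq 2(p+2q)(\log p)(1+\gamma_0)\leq \tfrac1{1600}\,\theta_0^2 n/\lambda^2_{\max}$. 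Chaining the last two inequalities produces $l_n(\Theta)-l_n(\Theta_0)\leq T_1-T_2\leq-2q(\log p)(1+\gamma_0)$, as required.

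The only point needing care — the ``main obstacle,'' such as it is — is checking that the single constant $3200$ is generous enough to do double duty: it must make $\sqrt{3/800}$ strictly smaller than $\tfrac1{16}$ (so that $T_2-T_1$ is a genuinely positive multiple of $\theta_0^2 n/\lambda^2_{\max}$) and at the same time leave a leftover — here the factor $\tfrac1{1600}$ — that still dominates the target $2q(\log p)(1+\gamma_0)$. The arithmetic above shows it does, with a little room to spare; everything else is routine substitution.
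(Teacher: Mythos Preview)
Your proof is correct and follows essentially the same approach as the paper. The paper normalizes by introducing $A=\frac{(p+2q)\log p}{n}\cdot\frac{\lambda_{\max}^2}{\theta_0^2}$ and reduces to the single numerical check $A(1+\gamma_0)+\sqrt{A\cdot 6C^2(2+\e_1)}\leq\frac{1}{3200}+\sqrt{\frac{12}{3200}}<\frac{1}{16}$, whereas you keep everything in units of $\theta_0^2 n/\lambda_{\max}^2$ and check $\sqrt{3/800}\leq 99/1600$ and $1/16-99/1600=1/1600\geq 2/3200$; these are cosmetically different packagings of the same two uses of the $\max$ in assumption~(\ref{eqn_Assumption1}) together with $C\geq\s^2_{\max}\lambda_{\max}$.
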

\begin{proof}
It is sufficient to show that
$$\sqrt{\theta_0^2(p+2q)\times 6\s^4_{max}(2+\e_1)n\log p}-\frac{1}{2}\theta_0^2\times \frac{n}{2}(2\lambda_{max})^{-2}\leq -(p+2q)(\log p)(1+\gamma_0)\;\;.$$
We rewrite this as
$$\sqrt{A\times n^2\theta_0^4\lambda_{max}^{-2} 6\s^4_{max}(2+\e_1)}-\frac{1}{2}\theta_0^2\times \frac{n}{2}(2\lambda_{max})^{-2}\leq -A\times n\times\theta_0^2\lambda_{max}^{-2}(1+\gamma_0)\;\;,$$
where
$$A=\frac{(p+2q)\log p}{n}\times\frac{\lambda_{max}^2}{\theta_0^2}\;\;.$$
Using $C\geq \sigma_{max}^2\lambda_{max}$, it's sufficient to show that
$$\sqrt{A\times n^2\theta_0^4\lambda_{max}^{-4} 6C^2(2+\e_1)}-\frac{1}{2}\theta_0^2\times \frac{n}{2}(2\lambda_{max})^{-2}\leq -A\times n\times\theta_0^2\lambda_{max}^{-2}(1+\gamma_0)\;\;.$$
Dividing out common factors, the above is equivalent to showing that
$$\sqrt{A\times  6C^2(2+\e_1)}-\frac{1}{16}\leq -A\times(1+\gamma_0)\;\;.$$
By assumption (\ref{eqn_Assumption1}), we know:
$$A\times(1+\gamma_0)\leq\frac{1}{3200}\;\;,$$
and also,
$$A\times 6C^2(2+\epsilon_1)\leq 12\times\frac{1}{3200}\;\;.$$
Therefore,
$$A\times(1+\gamma_0)+\sqrt{A\times 6C^2(2+\epsilon_1)}\leq\frac{1}{3200}+\sqrt{\frac{12}{3200}}<\frac{1}{16}\;\;,$$
as desired.
\end{proof}

Lemma~\ref{likelihoodbound2} below is sufficient to fill in the details of Theorem~\ref{thm:2-nonasy}.
\begin{alemma}\label{likelihoodbound2} Recall that, in the proof of Theorem~\ref{thm:2-nonasy}, we showed that, stochastically,
 $$l_n(\hat{\Theta}(\ss))-l_n(\hat{\Theta}(\so))\leq \frac{n}{2}\times\frac{1}{n-\sqrt{2q}-1}\chi^2_m\;\;.$$
Then this implies that
$$P\{l_n(\hat{\Theta}(\ss))-l_n(\hat{\Theta}(\so))\geq 2(1+\gamma_0)m\log(p)\}\leq \frac{1}{4\sqrt{\pi}\log p}e^{-\frac{m}{2}(4(1+\frac{\e_0}{2})\log p)}\;\;.$$
\end{alemma}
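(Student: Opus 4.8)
The plan is to reduce the asserted inequality to a single right-tail estimate for a $\chi^2_m$ variable and then invoke the chi-square bound (CSB). Using the displayed stochastic domination, the event in question is contained in $\{\chi^2_m\geq m(1+\lambda)\}$ with
\[
1+\lambda=\frac{4(n-\sqrt{2q}-1)}{n}\,(1+\gamma_0)\log p,
\]
so that $\lambda$ is of order $\log p$ and in particular $1+\lambda>1$. Applying the right-tail part of (CSB) to $\chi^2_m$ bounds the probability by $\frac{1}{\lambda\sqrt{\pi m}}\exp\{-\tfrac{m}{2}(\lambda-\log(1+\lambda))\}$. Matching this against the target bound $\frac{1}{4\sqrt{\pi}\log p}\exp\{-\tfrac{m}{2}\cdot 4(1+\tfrac{\e_0}{2})\log p\}$ then comes down to two inequalities: the exponent inequality $\lambda-\log(1+\lambda)\geq 4(1+\tfrac{\e_0}{2})\log p$, and the prefactor inequality $\lambda\sqrt{m}\geq 4\log p$.

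For the exponent I would \emph{not} route through the lossy estimate $\lambda-\log(1+\lambda)\geq(\sqrt{1+\lambda}-1)^2$; instead I would write $\lambda-\log(1+\lambda)=(1+\lambda)-1-\log(1+\lambda)$ and expand $\log(1+\lambda)=\log 4+\log(1+\gamma_0)+\log\log p+\log\frac{n-\sqrt{2q}-1}{n}$ explicitly. Substituting into the target inequality, the $\log\log p$, $\log 4$ and additive constants cancel against exactly the correction term $\frac{\log\log p+\log(4\sqrt{1+\gamma_0})+1}{2\log p}$ that was built into assumption (\ref{eqn_Assumption2}), and after using $\log(1+\gamma_0)\leq\gamma_0=(\sqrt{1+\gamma_0}-1)(\sqrt{1+\gamma_0}+1)$ the whole thing collapses to a comparison of the form $4(\log p)\,\sqrt{1+\gamma_0}\,(\sqrt{1+\gamma_0}-1)\geq\tfrac12\log(1+\gamma_0)$, which holds with large slack whenever $p$ is not tiny. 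The factor $\frac{n-\sqrt{2q}-1}{n}<1$ only lowers $1+\lambda$, and hence $\lambda-\log(1+\lambda)$, by at most an additive $\frac{4(\sqrt{2q}+1)(1+\gamma_0)\log p}{n}$; assumption (\ref{eqn_Assumption1}) (which in particular forces $\frac{(p+2q)\log p}{n}$ and $\frac{(1+\gamma_0)\log p}{n}$ to be small, hence $\frac{\sqrt{2q}(1+\gamma_0)\log p}{n}$ small) makes this loss a negligible constant that is absorbed into the slack above. The prefactor inequality is then easy: assumption (\ref{eqn_Assumption2}) forces $\gamma_0>\e_0>0$, so $\lambda=\tfrac{4(n-\sqrt{2q}-1)}{n}(1+\gamma_0)\log p-1\geq 4\log p$ in the regime where the assumptions hold, and $m\geq 1$, giving $\frac{1}{\lambda\sqrt{\pi m}}\leq\frac{1}{4\sqrt{\pi}\log p}$. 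Putting the two pieces together and noting $\exp\{-\tfrac{m}{2}\cdot 4(1+\tfrac{\e_0}{2})\log p\}=p^{-2m-m\e_0}$ gives precisely the stated bound.

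The only genuinely delicate step is the bookkeeping in the exponent inequality: one must check that the correction term in assumption (\ref{eqn_Assumption2}) is tuned so as to soak up \emph{exactly} the $-\log(1+\lambda)$ expansion terms (which is why it contains $\log\log p$ and $\log(4\sqrt{1+\gamma_0})$) together with the perturbation coming from the $\frac{n-\sqrt{2q}-1}{n}$ factor, leaving only a trivial lower bound on $p$ to verify. Everything else---the stochastic reduction, the application of (CSB), the prefactor bound, and the final rewriting of the exponential---is routine.
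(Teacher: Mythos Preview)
Your plan is correct and will close, but it diverges from the paper's argument in one structural way that is worth flagging. The paper does \emph{not} carry the factor $\frac{n-\sqrt{2q}-1}{n}$ through as a perturbation. Instead it first proves the clean inequality
\[
\frac{n-\sqrt{2q}-1}{n}\;\geq\;(1+\gamma_0)^{-1/2},
\]
obtained by combining assumption~(\ref{eqn_Assumption1}) (which gives $\frac{p+2q}{n}\leq\frac{1}{4\sqrt{1+\gamma_0}\log p}$) with the observation from assumption~(\ref{eqn_Assumption2}) that $\frac{1}{\log p}\leq 4(\sqrt{1+\gamma_0}-1)$. This collapses the chi-square threshold from $4(1+\gamma_0)\log p\cdot\frac{n-\sqrt{2q}-1}{n}$ to exactly $4\sqrt{1+\gamma_0}\log p$, and assumption~(\ref{eqn_Assumption2}) is then seen to be tuned \emph{precisely} so that (CSB) applied with $1+\lambda=4\sqrt{1+\gamma_0}\log p$ yields the stated exponent with no slack left over: the correction term $\frac{\log\log p+\log(4\sqrt{1+\gamma_0})+1}{2\log p}$ is literally the expansion of $-\log(1+\lambda)-1$ at this value of $\lambda$.

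Your route---keeping $1+\lambda=\frac{4(n-\sqrt{2q}-1)}{n}(1+\gamma_0)\log p$ and absorbing both the $(n-\sqrt{2q}-1)/n$ perturbation and the mismatch $\log(1+\gamma_0)$ versus $\tfrac12\log(1+\gamma_0)$ into the slack $4(\log p)\bigl[\gamma_0-(\sqrt{1+\gamma_0}-1)\bigr]=4(\log p)\sqrt{1+\gamma_0}(\sqrt{1+\gamma_0}-1)$---does work, since assumption~(\ref{eqn_Assumption2}) forces $\sqrt{1+\gamma_0}-1>\frac{1}{2\log p}$ and hence this slack exceeds $2$, more than enough to cover $\tfrac12\log(1+\gamma_0)\le\tfrac12\log 2$ and the $O(1/n)$ loss. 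But this bookkeeping is messier, and it obscures \emph{why} assumption~(\ref{eqn_Assumption2}) carries a $\sqrt{1+\gamma_0}$ rather than a $1+\gamma_0$. The paper's reduction makes that design transparent; yours treats it as a happy accident with room to spare.
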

\begin{proof}
First, we show that $\frac{n-\sqrt{2q}-1}{n}\geq(1+\gamma_0)^{-\frac{1}{2}}$. By assumption (\ref{eqn_Assumption2}) we see that:
$$\frac{1}{\log p}\leq 4(\sqrt{1+\gamma_0}-1)\;\;.$$
Now turn to assumption~(\ref{eqn_Assumption1}). We see that the right-hand side of~(\ref{eqn_Assumption1}) is $\leq \frac{1}{4\sqrt{1+\gamma_0}}$. On the left-hand side of~(\ref{eqn_Assumption1}), by definition, $\lambda_{max}^2\geq \theta_0^2$. Therefore,
$$\frac{(p+2q)\log p}{n}\leq \frac{1}{4\sqrt{1+\gamma_0}}\;\;.$$
Therefore,
$$\frac{\sqrt{2q}+1}{n}\leq\frac{p+2q}{n}\leq \frac{4(\sqrt{1+\gamma_0}-1)}{4\sqrt{1+\gamma_0}}=1-\frac{1}{\sqrt{1+\gamma_0}}\;\;,$$
and so,
$$\frac{n-\sqrt{2q}-1}{n}\geq (1+\gamma_0)^{-\frac{1}{2}}\;\;.$$
Therefore, using the stochastic inequality in the statement in the lemma,
\begin{eqnarray*}
P\{l_n(\hat{\Theta}(\ss))-l_n(\hat{\Theta}(\so))&\geq& 2(1+\gamma_0)m\log(p)\}\\
&\leq& P\{\chi^2_m\geq 4(1+\gamma_0)m\log p \times \frac{n-\sqrt{2q}-1}{n}\}\\
&\leq& P\{\chi^2_m\geq 4\sqrt{1+\gamma_0}m\log p\}\;\;.\\
\end{eqnarray*}
Now we apply the chi-square bound from~\cite{cai:2002}, and obtain that
$$ P\{\chi^2_m\geq 4\sqrt{1+\gamma_0}m\log p\}\leq \frac{1}{(4\sqrt{1+\gamma_0}\log p-1)\sqrt{\pi m}}e^{-\frac{m}{2}(4\sqrt{1+\gamma_0}\log p-1-\log(4\sqrt{1+\gamma_0}\log p))}\;\;.$$
Since $m\geq 1$ and $\frac{1}{\log p}\leq 4(\sqrt{1+\gamma_0}-1)$, we obtain that the upper bound is at most
\begin{eqnarray*}
&&\frac{1}{4\sqrt{\pi}\log p}e^{-\frac{m}{2}(4\sqrt{1+\gamma_0}\log p-1-\log(4\sqrt{1+\gamma_0}\log p))}\\
&=&\frac{1}{4\sqrt{\pi}\log p}e^{-\frac{m}{2}(4\sqrt{1+\gamma_0}\log p-(\log\log p +\log(4\sqrt{1+\gamma_0})+1))}\\
&=&\frac{1}{4\sqrt{\pi}\log p}e^{-\frac{m}{2}(2\log p)(2\sqrt{1+\gamma_0}-(\log\log p +\log(4\sqrt{1+\gamma_0})+1)/(2\log p))}\;\;.\\
\end{eqnarray*}
By assumption (\ref{eqn_Assumption2}), we may further bound this expression from above as 
$$\frac{1}{4\sqrt{\pi}\log p}e^{-\frac{m}{2}(2\log p)(2+\epsilon_0)}=\frac{1}{4\sqrt{\pi}\log p}e^{-\frac{m}{2}4(1+\frac{\epsilon_0}{2})\log p}\;\;.$$
\end{proof}
\end{appendix}

\end{document}